%
%
\documentclass[11pt]{amsart}
\usepackage{amssymb,mathrsfs,graphicx}
\usepackage{hyperref,color}

\def\bu{\mathbf u}
\def\bx{\mathbf x}
\def\by{\mathbf y}
\def\gtsim{\stackrel{{}_>}{{}_\sim}}

\topmargin-0.1in
\textwidth6.in
\textheight8.5in
\oddsidemargin0in
\evensidemargin0in
\title[A new model for flocking]{A new model for self-organized dynamics\\and its flocking behavior}

\author[Sebastien Motsch]{Sebastien Motsch}
\address[Sebastien Motsch]{\newline
  Department of Mathematics,  Institute for Physical Science and Technology\newline
  and \newline
  Center of Scientific Computation And Mathematical Modeling (CSCAMM)\newline
  University of Maryland, 
  College Park, MD 20742 USA}
\email[]{smotsch@cscamm.umd.edu}
\urladdr{http://www.seb-motsch.com}

\author[Eitan Tadmor]{Eitan Tadmor}
\address[Eitan Tadmor]{\newline
  Department of Mathematics,  Institute for Physical Science and Technology\newline
  and \newline
  Center of Scientific Computation And Mathematical Modeling (CSCAMM)\newline
  University of Maryland, 
  College Park, MD 20742 USA}
\email[]{tadmor@cscamm.umd.edu}
\urladdr{http://www.cscamm.umd.edu/tadmor}

\newtheorem{theorem}{Theorem}[section]
\newtheorem{lemma}[theorem]{Lemma}
\newtheorem{corollary}[theorem]{Corollary}
\newtheorem{proposition}[theorem]{Proposition}
\newtheorem{remark}[theorem]{Remark}

\newtheorem{definition}[theorem]{Definition}



%
{{\par\noindent \it Proof lemma. \nobreak}}%
{\nobreak \removelastskip \nobreak \it End proof lemma. \medbreak}
\newcommand{\ds}{\displaystyle}
\newcommand{\nb}{\nonumber}

\begin{document}

\date{\today}

\subjclass{92D25,74A25,76N10}
\keywords{Self-organized dynamics, flocking, active sets, kinetic formulation, moments, hydrodynamic formulation.}

\thanks{\textbf{Acknowledgment.} The work is supported by NSF grants DMS10-08397 and
  FRG07-57227.}

\dedicatory{To Claude Bardos on his $70^{th}$ birthday, with friendship and admiration}

\begin{abstract}
  We introduce a  model for self-organized dynamics which, we argue, addresses several drawbacks of the celebrated Cucker-Smale (C-S) model.  The proposed model does not only take into account the distance between agents, but instead, the influence between agents is scaled in term of their \emph{relative distance}. Consequently, our model does not involve any explicit dependence on the number of agents; only their geometry in phase space is taken into account. The use of relative distances destroys the symmetry property of the original C-S model, which was the key for the various recent studies of C-S flocking behavior. To this end, we introduce here a new framework to analyze the phenomenon of flocking for a rather general class of dynamical systems, which covers systems with \emph{non-symmetric} influence matrices. In particular, we analyze the flocking behavior of the proposed model as well as other strongly asymmetric models with ``leaders".

  The methodology presented in this paper, based on the notion of \emph{active sets}, carries over from  the particle to  kinetic and hydrodynamic descriptions. In particular, we discuss the hydrodynamic formulation of our proposed  model, and  prove  its unconditional flocking  for slowly decaying influence functions.
\end{abstract}
\maketitle
\centerline{\date}

\setcounter{tocdepth}{1}

\tableofcontents

\section{Introduction}
\setcounter{equation}{0}

The modeling of self-organized systems such as a flock of birds, a swarm of bacteria or a school of fish, \cite{aoki_simulation_1982,buhl_disorder_2006,camazine_self-organization_2001,grimm_individual-based_2005,huth_simulation_1992,huth_simulation_1994,parrish_self-organized_2002,viscido_individual_2004}, has brought new mathematical challenges. One of the many questions addressed concerns the emergent behavior in these systems and in particular, the emergence of ``flocking behavior''. Many models have been introduced to appraise the emergent behavior of self-organized systems \cite{ballerini_interaction_2008,birnir_ode_2007,couzin_collective_2002,gregoire_onset_2004,hemelrijk_self-organized_2008,reynolds_flocks_1987,vicsek_novel_1995,youseff_parallel_2008}.
The starting point for our discussion is the pioneering work of Cucker-Smale, \cite{cucker_emergent_2007,cucker_mathematics_2007}, which led to many subsequent studies \cite{birnir_ode_2007,carrillo_asymptotic_2010,ha_simple_2009,ha_particle_2008,ha_emergence_2009,shen_cucker-smale_2008}.  The C-S model describes how agents interact in order to align with their neighbors. It relies on a simple rule which goes back to \cite{reynolds_flocks_1987}: the closer two individuals are, the more they tend to align with each other (long range cohesion and short range repulsion are ignored). The motion of each agent ``$i$" is described by two quantities: its position, ${\bf x}_i\in \mathbb{R}^d$, and its velocity, ${\bf v}_i\in \mathbb{R}^d$. The evolution of each agent is then governed by the following dynamical system,
\begin{subequations}\label{eqs:CS}
  \begin{equation}
    \label{eq:CS}
    \frac{d{\bf x}_i}{dt} = {\bf v}_i, \qquad \frac{d{\bf v}_i}{dt} = \frac{\alpha}{N} \sum_{j=1}^N\, \phi_{ij} ({\bf v}_j-{\bf v}_i).
  \end{equation}
  Here, $\alpha$ is a positive constant and $\phi_{ij}$ quantifies the pairwise influence of agent ``$j$" on the alignment of agent ``$i$", as a function of their distance,
  \begin{equation}
    \label{eq:kernel_sym}
    \phi_{ij} := \phi(|{\bf x}_j-{\bf x}_i|).
  \end{equation}
\end{subequations}
The so-called \emph{influence function}, $\phi(\cdot)$, is a strictly positive decreasing function which, by rescaling $\alpha$ if necessary, is normalized so that $\phi(0)=1$. A prototype example for such an influence function is given by $\phi(r) = (1+r)^{-s}, s>0$. Observe that the C-S model (\ref{eqs:CS}) is \emph{symmetric} in the sense that the coefficients matrix $\phi_{ij}$ is, namely, agents ``$i$" and ``$j$" have the same influence on the alignment of each other,
\begin{equation}
  \label{eq:sym_phi_ij}
  \phi_{ij} = \phi_{ji}.
\end{equation}
The symmetry in the C-S model is the cornerstone for studying the long time behavior of (\ref{eqs:CS}). Indeed, symmetry implies that the total momentum in the C-S model is conserved,
\begin{subequations}\label{eqs:CSdyn}
  \begin{equation}
    \label{eq:mom}
    \frac{d}{dt} \left(\frac{1}{N} \sum_{i=1}^N {\bf v}_i(t)\right) =0 \ \;\;  \mapsto \;\; \
    \overline{\bf v}(t):=\frac{1}{N} \sum_{i=1}^N {\bf v}_i(t) =\overline{\bf v}(0).
  \end{equation}
  Moreover, the symmetry of (\ref{eq:sym_phi_ij}) implies that the C-S system is \emph{dissipative},
  \begin{equation}
    \label{eq:dissip}
    \frac{d}{dt} \frac{1}{N} \sum_{i} |{\bf v}_i-\overline{{\bf v}}|^2 = -\frac{\alpha}{2N^2} \sum_{i,j} \phi_{ij}  |{\bf v}_i-{\bf v}_j|^2 \leq  - \min_{ij}\phi_{ij}(t) \times \frac{\alpha}{N} \sum_{i}  |{\bf v}_i-\overline{{\bf v}}|^2.
  \end{equation}
\end{subequations}
Consequently, (\ref{eqs:CSdyn}) yields the large time behavior,  $\bx_i(t) \approx \overline{\bf v}t$, and hence $\min_{ij}\phi_{ij}(t) \gtsim  \phi(|\overline{\bf v}|t)$. This, in turn, implies that  the  C-S dynamics converges to the bulk mean velocity, 
\begin{equation}\label{eq:CSbulk}
  {\bf v}_i(t) \stackrel{t\rightarrow \infty}{\longrightarrow}  \overline{\bf v}(0),
\end{equation}
provided the long-range influence between agents, $\phi(|{\bf x}_j-{\bf x}_i|)$, decays sufficiently slow in the sense that $\phi(\cdot)$ has a diverging tail,
\begin{equation}
  \label{eq:phi_not_integrable}
  \int^\infty \phi(r)\,dr=\infty.
\end{equation}
We conclude that the C-S model with a slowly decaying influence function (\ref{eq:phi_not_integrable}), has an \emph{unconditional} convergence to a so-called \emph{flocking dynamics}, in the sense that (i) the diameter, $\max_{i,j}|\bx_i(t)-\bx_j(t)|$, remains uniformly bounded, thus defining the domain of the ``flock"; and (ii) all agents of this flock will approach the same velocity --- the emerging ``flocking velocity".
\begin{definition}\cite[p. 416]{ha_particle_2008}\label{defi:flock}
  Let $\{{\bf x}_i(t) ,{\bf v}_i(t) \}_{i= 1,\dots,N}$ be a given particle system, and let $d_X(t)$ and $d_V(t)$ denote its diameters in position and velocity phase spaces,
  \begin{subequations}
    \begin{eqnarray}
      \label{eq:d_X}
      d_X(t) &=& \max_{i,j} |{\bf x}_j(t)-{\bf x}_i(t)|,\\
      \label{eq:d_V}
      d_V(t) &=& \max_{i,j} |{\bf v}_j(t)-{\bf v}_i(t)|.
    \end{eqnarray}
  \end{subequations}
  The  system $\{{\bf x}_i(t) ,{\bf v}_i(t) \}_{i= 1,\dots,N}$ is said to converge to a flock if 
  the following two conditions hold, uniformly in $N$,
  \begin{equation}\label{eq:cond_flock}
    \sup_{t\geq0}\, d_X(t) < +\infty \quad \text{ and } \quad \lim_{t\rightarrow+\infty} d_V(t) =0.
  \end{equation}
\end{definition}

\begin{remark}
  One can distinguish between two types of flocking behaviors. When \eqref{eq:cond_flock} holds for \emph{all} initial data, $\{{\bf x}_i(0) ,{\bf v}_i(0) \}_{i= 1,\dots,N}$, it is referred to as \emph{unconditional} flocking, e.g., \cite{carrillo_asymptotic_2010,cucker_emergent_2007,ha_simple_2009, ha_particle_2008,shen_cucker-smale_2008}. In contrast, \emph{conditional} flocking occurs when \eqref{eq:cond_flock} is limited to a certain class of initial configurations.
\end{remark}

The flocking behavior of the C-S model derived in \cite{ha_particle_2008} was based on the $\ell^2$-based arguments outlined in  (\ref{eqs:CSdyn}). Other approaches, based on spectral analysis, $\ell^1$- and $\ell^\infty$-based estimates  were used in \cite{carrillo_asymptotic_2010,cucker_emergent_2007,ha_simple_2009} to derive C-S flocking with a (refined version of) slowly decaying influence function (\ref{eq:phi_not_integrable}). Though the derivations are different, they all require the symmetry of the C-S influence matrix, $\phi_{ij}$.

Despite the elegance of the results regarding its flocking behavior, the description of self-organized dynamics by the C-S model suffers from several drawbacks. We mention in this context the normalization of C-S model in (\ref{eq:CS}) by the total number of agents, $N$, which is shown, in section \ref{sec:draw} below, to be inadequate for far-from-equilibrium scenarios. The first main objective of this work is to introduce a new model for self-organized dynamics which, we argue, will address several drawbacks of the C-S model.  Indeed, the model introduced in section \ref{sec:newmodel} below, does not just take into account the distance between agents, but instead, the influence two agents exert on each other is scaled in term of their \emph{relative distances}. As a consequence, the proposed model does not involve any explicit dependence on the number of agents --- just their geometry in phase space is taken into account. It lacks, however, the symmetry property of the original C-S model, (\ref{eq:sym_phi_ij}). This brings us to the second main objective of this work: in section \ref{sec:tools} we develop a new framework to analyze the phenomenon of flocking for a rather general class of dynamical systems of the form,
\[
\frac{d{\bf x}_i}{dt} = {\bf v}_i, \qquad \frac{d{\bf v}_i}{dt} = \alpha 
\sum_{j\ne i}^N a_{ij}({\bf v}_j-{\bf v}_i), \quad \; a_{ij} \geq 0, \; \ \sum_{j\neq i}a_{ij}<1,
\]
which allows for non-symmetric influence matrices, $a_{ij}\ne a_{ji}$. Here we utilize the concept of \emph{active sets}, which enables us to define the notion of a neighborhood of an agent; this quantifies the ``neighboring" agents in terms of their level of influence, rather than the usual Euclidean distance. The cornerstone of our study of flocking behavior, presented in section \ref{sec:lemma}, is based on a key algebraic lemma, interesting for its own sake, which bounds the maximal action of antisymmetric matrices on active sets. Accordingly, the main result summarized in theorem \ref{thm:ine_dx_dv}, quantifies the dynamics of the diameters, $d_X(t)$ and $d_V(t)$, in terms of the global active set associated with the model.  We conclude, in section \ref{sec:new-model-flock}, that the dynamics of our proposed model will experience unconditional flocking provided the influence function $\phi$ decays sufficiently slowly such that,
\begin{equation}
  \label{eq:phi2_not_integrable}
  \int^\infty \phi^2(r)\,dr=\infty.
\end{equation}
This is slightly more restrictive than the condition for flocking in the symmetric case of C-S model, (\ref{eq:phi_not_integrable}).  Another fundamental difference between the flocking behavior of these two models is pointed out in remark \ref{rem:differences} below: unlike the C-S flocking to the initial bulk velocity $\overline{\bf v}(0)$ in (\ref{eq:CSbulk}), the asymptotic flocking velocity of our proposed model is not necessarily encoded in the initial configuration as an invariant of the dynamics, but it is \emph{emerging} through the flocking dynamics of the model.

The methodology developed in this work is not limited to the new model, whose flocking behavior is analyzed in section \ref{subsec:new-model-flock}. In section \ref{sec:leader}, we use the concept of active sets to study the flocking behavior of models with a ``leader''. Such models are strongly asymmetric, since they assume that some individuals are more influential than the others.

Finally, in section \ref{sec:meso} and, respectively, section \ref{sec:hydro}, we pass from the particle to  kinetic and, respectively, hydrodynamic descriptions of the proposed model. The latter amounts to the usual statements of conservation of mass, $\rho$, and balance of momentum, $\rho\bu$,
\begin{subequations}\label{eqs:hydro_cons}
  \begin{align} 
    \partial_t\rho+ \nabla_{\bx}\cdot(\rho\bu)&=0\\
    \label{eq:hydro_con2}
    \partial_t(\rho \bu)+ \nabla_\bx(\rho\bu\otimes\bu)&=\alpha\rho\left(\frac{\langle {\bf u}\rangle}{\langle 1\rangle}-\bu\right),\quad
    \langle {\bf w}\rangle(\bx):=\int_{\by} \phi(|\bx-\by|){\bf w}(\by)\rho(\by)d\by.
  \end{align}
\end{subequations}
We extend our methodology of active sets to study the flocking behavior in these contexts of mesoscopic  and macroscopic scales. In particular, we prove  the unconditional flocking behavior of (\ref{eqs:hydro_cons})  with a slowly decaying influence function, $\phi$, such that (\ref{eq:phi2_not_integrable}) holds,
\[
\sup_{{\bf x}, {\bf y}\in Supp\left(\rho(\cdot,t)\right)}|{\bf u}(t,{\bf x})-{\bf u}(t,{\bf y})| \stackrel{t\rightarrow \infty}{\longrightarrow} 0.
\]

\section{A model for self-organized dynamics}
\setcounter{equation}{0}

In this section, we introduce the new model that will be the core of this work. This model is motivated by some drawbacks of the C-S model.

\subsection{Drawbacks of the C-S model}\label{sec:draw}
Originally, the C-S model was introduced in \cite{cucker_emergent_2007} to model a finite number of agents. The normalization pre-factor ${1}/{N}$ in (\ref{eq:CS}) was added later in Ha and Tadmor, \cite{ha_particle_2008}, in order to study the ``mean-field'' limit as the number of agents $N$ becomes very large. This modification, however, has a drawback in the modeling: the motion of an agent is modified by the total number of agents even if its dynamics is only influenced by essentially a few nearby agents. To better explain this problem, we sketch a particular scenario shown in figure \ref{fig:sketch_birds}. Assume that there is a small group of $N_1$ agents, $G_1$, at a large distance from a large group of $N_2$ agents, $G_2$; by assumption, we have $N_1<<N_2$. If the distance between the two groups is large enough, we have,
\begin{subequations}\label{eqs:sketch_birds}
  \begin{equation}
    \label{eq:no_interaction}
    \phi_{ij} \approx 0 \qquad \text{if } i\in G_1 \text{ and } j \in G_2.
  \end{equation}
  In this situation, the C-S dynamics of  every agent ``$i$" in group $G_1$ reads,
  \begin{equation}
    \frac{d{\bf v}_i}{dt} \approx \frac{\alpha}{N_1+N_2} \sum_{1\leq j\leq N_1} \phi_{ij} ({\bf v}_j-{\bf v}_i), \qquad i\in G_1.
  \end{equation}
\end{subequations}
Therefore, since  there are only $N_1$ ``essentially" active neighbors of ``$i$", yet we  average over the much larger set of  $N_1+N_2 \gg N_1$ agents, we would have ${d{\bf v}_i}/{dt} \approx 0$. Thus,  the presence of a large group of agents $G_2$ in the horizon of $G_1$, will almost halt the dynamics of $G_1$.

\begin{figure}[ht]
  \centering
  \includegraphics[scale=.65]{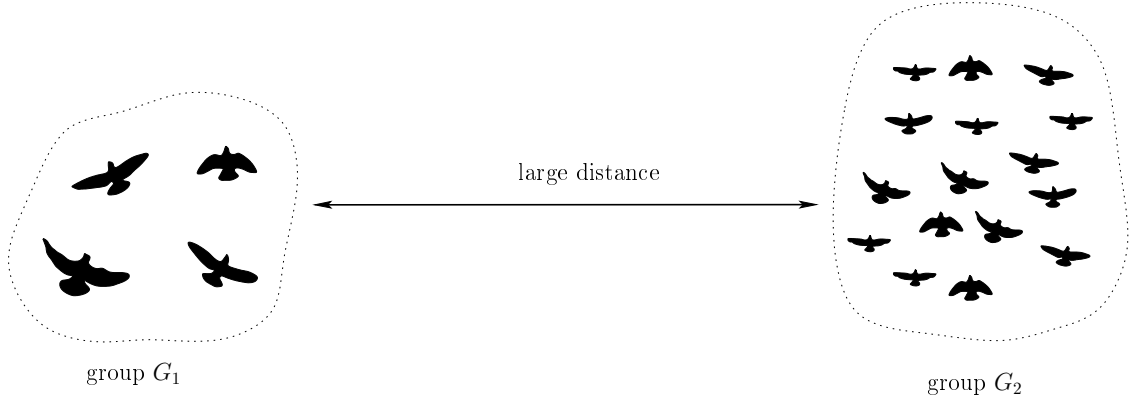}
  \caption{A small group of birds $G_1$ at a large distance from a larger group $G_2$
    (\ref{eq:no_interaction}). Due to the normalization $1/N$ in the C-S
    model (\ref{eq:CS}), the group $G_1$ will almost stop interacting.}
  \label{fig:sketch_birds}
\end{figure}

\subsection{A  model with non-homogeneous phase space}\label{sec:newmodel}
We propose the following dynamical system to describe the motion of  agents 
$\{{\bf x}_i(t),{\bf v}_i(t) \}_{i=1}^N$,
\begin{equation}
  \label{eq:eitan}
  \frac{d{\bf x}_i}{dt} = {\bf v}_i, \qquad \frac{d{\bf v}_i}{dt} = \frac{\alpha}{\sum_{k=1}^N \phi_{ik}}
  \sum_{j=1}^N\, \phi_{ij}\, ({\bf v}_j-{\bf v}_i), \qquad \phi_{ij}=\phi(|\bx_j-\bx_i|).
\end{equation}
Here, $\alpha$ is a positive constant and $\phi(\cdot)$ is the influence function. The main feature here is that the  influence  agent ``$j$" has on the alignment of agent ``$i$",  is weighted by the total influence, $\sum_{k=1}^N \phi_{ik}$, exerted on  agent ``$i$".

In the case where all agents are clustered around the same distance,
i.e., $\phi_{ij} \approx \phi_0$, then the model (\ref{eq:eitan}) amounts to C-S dynamics,
\begin{displaymath}
  \frac{d{\bf v}_i}{dt} = \frac{\alpha}{N\phi_0} \sum_{j=1}^N\, \phi_{ij}\, ({\bf v}_j-{\bf v}_i).
\end{displaymath}
But unlike the C-S model, the space modeled by (\ref{eq:eitan}) need \emph{not} be homogeneous. In particular, it better captures strongly non-homogeneous scenarios such as those depicted in \ref{eqs:sketch_birds}: the motion of an agent ``$i$" in the smaller group $G_1$ will be, to a good approximation, dominated by the agents in group $G_1$,
\begin{displaymath}
  \frac{d{\bf v}_i}{dt} \approx \frac{\alpha}{N_1 \phi_0} \sum_{1\leq j \leq N_1}\, \phi_{ij}\, ({\bf v}_j-{\bf v}_i).
\end{displaymath}
Here, $\phi_0$ is the coefficient of interaction inside the nearby group $G_1$, i.e., $\phi_{ij} \approx \phi_0$ for $i,\,j\,\in G_1$, whereas the agents in the ``remote" group $G_2$, will only have a negligible influence, $\sum_k\phi_{ik} \approx N_1\phi_0$.

The  normalization of pairwise interaction  between agents in terms of \emph{relative} influence has the consequence of loss of symmetry: the model (\ref{eq:eitan}) can be written as,
\[
\frac{d{\bf x}_i}{dt} = {\bf v}_i, \qquad \frac{d{\bf v}_i}{dt} = \alpha \sum_{j=1}^N\, a_{ij} ({\bf v}_j-{\bf v}_i),
\]
where the coefficients $a_{ij}$, given by,
\[
a_{ij} = \frac{\phi(|{\bf x}_j-{\bf x}_i|)}{\sum_{k=1}^N \phi(|{\bf x}_k-{\bf x}_i|)},
\]
lack the symmetry property, $a_{ij} \neq a_{ji}$. Two more examples of models with asymmetric influence matrices  will be discussed below. The flocking behavior of a model with \emph{leaders}, in which agents follow one or more ``influential" agents and hence lack symmetry,  is analyzed  in section \ref{sec:new-model-flock} below.  In section \ref{sec:conc} we introduce a model with \emph{vision} in which agents are aligned with those agents ahead of them, as another prototypical example for self-organized dynamics which lacks symmetry, and we comment on the difficulties in its flocking analysis. Tools for studying flocking behavior of  such asymmetric models are outlined in the next section.

\section{New tools to study flocking}\label{sec:tools}
\setcounter{equation}{0}

We want to study the long time behavior of the proposed model (\ref{eq:eitan}). The lack of symmetry, however, breaks down the nice properties of conservation of momentum, (\ref{eq:mom}), and energy dissipation, (\ref{eq:dissip}), we had with the C-S model. The main tool for studying the C-S flocking was the variance, $\left(\sum |{\bf v}_i-\overline{\bf v}|^p\right)^{1/p}$, in either one of its $\ell^p$-versions, $p=1,2$ or $p=\infty$. But since the momentum is not conserved in the proposed model (\ref{eq:eitan}), the variance is no longer a useful quantity to look at; indeed, it is not even a priori clear what should be the ``bulk" velocity, $\overline{\bf v}$, to measure such a variance.

In this section, we discuss the tools to study the flocking behavior for a rather general class of dynamical systems of the form,
\begin{subequations}\label{eqs:eitan_average}
  \begin{equation}  \label{eq:eitan_averagea}
    \frac{d{\bf x}_i}{dt} = {\bf v}_i, \qquad \frac{d{\bf v}_i}{dt} = \alpha 
    \sum_{j\ne i}^N a_{ij}({\bf v}_j-{\bf v}_i), \quad a_{ij} \geq 0.
  \end{equation}
  Here, $\alpha$ is a positive constant, and $a_{ij}>0$ quantifies the pairwise influence of agent ``$j$" on the alignment of agent ``$i$", through possible dependence on the state variables, $\{\bx_k,{\bf v}_k\}_k$.  By rescaling $\alpha$ if necessary, we may assume without loss of generality that the $a_{ij}$'s are normalized so that
  \begin{equation}\label{eq:eitan_averageb}
    \sum_{j\neq i} a_{ij} \leq 1.
  \end{equation}
\end{subequations}
Setting $a_{ii}:=1-\sum_{j\ne i} a_{ij}$, we can rewrite (\ref{eqs:eitan_average}) in the form
\begin{subequations}\label{eqs:average}
  \begin{equation}\label{eq:averagea}
    \frac{d{\bf x}_i}{dt} = {\bf v}_i, \qquad \frac{d{\bf v}_i}{dt} = \alpha 
    (\overline{\bf v}_i-{\bf v}_i), 
  \end{equation}
  where the average velocity, $\overline{{\bf v}}_i$, is given by a \emph{convex combination} of the velocities surrounding  agent ``$i$",
  \begin{equation}\label{eq:averageb}
    \overline{{\bf v}}_i := \sum_{j=1}^N a_{ij} {\bf v}_j, \qquad \sum_j a_{ij}=1.
  \end{equation}
\end{subequations}
We should emphasize that there is no requirement of symmetry,  allowing $a_{ij}\ne a_{ji}$. This setup includes, in particular,  the  model for self-organized dynamics proposed in (\ref{eq:eitan}), with asymmetric coefficients $a_{ij}=\phi_{ij}/\sum_k \phi_{ik}$.

In order to study the flocking behavior of (\ref{eqs:eitan_average}), we quantify in section \ref{sec:active}, the \emph{decay} of the diameter, $d_V(t)$ using the notion of \emph{active sets}.  The relevance of this concept of active sets is motivated by a key lemma on the maximal action of antisymmetric matrices outlined in section \ref{sec:lemma}. This, in turn, leads to the main estimate of theorem \ref{thm:ine_dx_dv}, which governs the evolution of $d_X(t)$ and $d_V(t)$.

\subsection{Maximal action of antisymmetric matrices}\label{sec:lemma}
We begin our discussion with the following key lemma.        

\begin{lemma}
  \label{lem:fundamental_lemma}
  Let $S$ be an antisymmetric matrix, $S_{ij}=-S_{ji}$ with maximal entry $|S_{ij}|\leq M$.  Let $u,w \in \mathbb{R}^N$ be two given real vectors with positive entries, $u_i, w_i\geq 0$, and let $\overline{U}$, $\overline{W}$ denote their respective sum, $\overline{U}=\sum_i u_i$ and $\overline{W}=\sum_j w_j$. Fix $\theta>0$ and let $\lambda(\theta)$ denote the number of ``active entries'' of $u$ and $w$ at level $\theta$, in the sense that,
  \begin{displaymath}
    \lambda(\theta) =|\Lambda(\theta)|, \qquad \Lambda(\theta):=\left\{ j\ \big| \ u_j \geq \theta\, \overline{U} \, \, \text{and} \,
      \, w_j \geq \theta\, \overline{W} \ \right\}.
  \end{displaymath}
  Then, for every $\theta>0$, we have
  \begin{equation}
    \label{eq:fundamental_inequality}
    |\langle Su,w\rangle| \leq M \overline{U}\, \overline{W}\left(1- \lambda^2(\theta)\,\theta^2 \right).
  \end{equation}
\end{lemma}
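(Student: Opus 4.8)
The plan is to reduce the bilinear form to a sum over index pairs, use antisymmetry to expose a signed-difference structure, and then extract the improvement over the trivial bound from the active set.

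First I would symmetrize. Writing $\langle Su,w\rangle=\sum_{i,j}S_{ij}u_jw_i$ and relabeling indices $(i,j)\mapsto(j,i)$, the antisymmetry $S_{ji}=-S_{ij}$ gives $\langle Su,w\rangle=-\sum_{i,j}S_{ij}u_iw_j$. Averaging the two expressions yields
\[
\langle Su,w\rangle=\frac{1}{2}\sum_{i,j}S_{ij}\,(u_jw_i-u_iw_j),
\]
so that, using $|S_{ij}|\leq M$ and the triangle inequality,
\[
|\langle Su,w\rangle|\leq \frac{M}{2}\sum_{i,j}|u_jw_i-u_iw_j|.
\]
The entire problem is now to bound $T:=\sum_{i,j}|u_jw_i-u_iw_j|$ by $2\overline{U}\,\overline{W}\,(1-\lambda^2\theta^2)$.

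The key step is the elementary identity $|a-b|=a+b-2\min(a,b)$, valid for $a,b\geq 0$, applied with $a=u_jw_i$ and $b=u_iw_j$. Since $\sum_{i,j}(u_jw_i+u_iw_j)=2\overline{U}\,\overline{W}$, this rewrites $T$ as
\[
T=2\overline{U}\,\overline{W}-2\sum_{i,j}\min(u_jw_i,\,u_iw_j),
\]
so the desired estimate is equivalent to the lower bound $\sum_{i,j}\min(u_jw_i,u_iw_j)\geq \lambda^2\theta^2\,\overline{U}\,\overline{W}$. This is where the active set enters: all summands are nonnegative, so I may discard all terms except those with $i,j\in\Lambda(\theta)$. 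For such indices $u_j\geq \theta\overline{U}$ and $w_i\geq\theta\overline{W}$, and symmetrically, whence each of $u_jw_i$ and $u_iw_j$ is at least $\theta^2\overline{U}\,\overline{W}$; since $\Lambda(\theta)\times\Lambda(\theta)$ contains exactly $\lambda^2(\theta)$ ordered pairs, the claimed lower bound follows. Chaining the three displays delivers $|\langle Su,w\rangle|\leq M\overline{U}\,\overline{W}\,(1-\lambda^2\theta^2)$.

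I expect the argument to be essentially obstruction-free once the $\min$-identity is spotted; a useful consistency check is that $\theta=0$ recovers the trivial bound $|\langle Su,w\rangle|\leq M\overline{U}\,\overline{W}$, so that the entire gain $\lambda^2\theta^2$ measures precisely the mass that $u$ and $w$ concentrate \emph{in common} on $\Lambda(\theta)$. The conceptual heart of the proof, rather than any computational difficulty, is recognizing that the right quantity to bound from below is the pairwise overlap $\min(u_jw_i,u_iw_j)$ — exactly the part annihilated by the absolute value — and that this overlap is controlled uniformly on the active set.
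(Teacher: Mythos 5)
Your proof is correct and follows essentially the same route as the paper's: antisymmetric symmetrization of the bilinear form, the identity $|a-b|=a+b-2\min(a,b)$, and a lower bound on the overlap sum $\sum_{i,j}\min(u_iw_j,u_jw_i)$ by restricting to ordered pairs in $\Lambda(\theta)\times\Lambda(\theta)$. No gaps; the argument matches the paper's proof step for step.
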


\begin{remark} Lemma \ref{lem:fundamental_lemma} tells us that the maximal action of $S$ on $u,w$, does not exceed
  \[
  |\langle Su,w\rangle| \leq M \overline{U}\, \overline{W}\min_\theta\left(1- \lambda^2(\theta)\,\theta^2 \right).
  \]
  which improves the obvious upper-bound, $|\langle Su,w\rangle| \leq M \overline{U}\, \overline{W}$.
\end{remark}


\begin{proof}
  Using the antisymmetry of $S$, we find
  \begin{displaymath}
    \langle Su,w\rangle = \sum_{i,j} S_{ij}u_iw_j = \frac{1}{2} \sum_{i,j} S_{ij}\big(u_iw_j - u_jw_i\big),
  \end{displaymath}
  and since $S$ is bounded by $M$, we obtain the inequality,
  \begin{displaymath}
    |\langle Su,w\rangle| \leq \frac{M}{2} \sum_{i,j} |u_iw_j - u_jw_i|.
  \end{displaymath}
  The identity,  $|a-b|\equiv  a+b -2\min(a,b)$ for $a,b \geq 0$, implies
  \begin{eqnarray}
    |\langle Su,w\rangle|  & \leq &  \frac{M}{2}  \sum_{i,j} \Big(u_iw_j + u_j w_i -
    2\min\{u_iw_j,u_jw_i\}\Big) \nonumber \\
    \label{eq:ine_fund_temp}
    &= & M\Big(\overline{U}\,\overline{W} -  \sum_{i,j} \min\{u_iw_j,u_jw_i\}\Big).
  \end{eqnarray}
  By assumption, there are at least $\lambda(\theta)$ active entries at level $\theta$ which  satisfy \emph{both} inequalities,
  \begin{displaymath}
    k\in \Lambda(\theta): \ \ u_k \geq \theta \,\overline{U} \ \text{and} \  w_k \geq \theta \,\overline{W}.
  \end{displaymath}
  Therefore, by restricting  the sum in (\ref{eq:ine_fund_temp}) only to the pairs of these active entries we find
  \begin{displaymath}
    |\langle Su,w\rangle| \leq  M\Big(\overline{U}\,\overline{W} -  \sum_{i,j\in \Lambda(\theta)} \min\{u_iw_j,u_jw_i\}\Big) \leq   M\left(\overline{U}\,\overline{W}  \,-\, \lambda^2(\theta) \cdot \theta\,
      \overline{U} \cdot \theta\, \overline{W}\right),
  \end{displaymath}
  and the desired inequality (\ref{eq:fundamental_inequality}) follows.
\end{proof}

\subsection{Active sets and the decay of diameters}\label{sec:active}
The  concept of an \emph{active set} aims to determine a neighborhood of one or more  agents in (\ref{eqs:eitan_average}) based on the so-called \emph{influence matrix}, $\{a_{ij}\}$, rather than the usual Euclidean distance. The following definition, which applies to arbitrary matrices, is formulated in the language of influence matrices.

\begin{definition}[Active sets]\label{defi:active_set}
  Let $\{a_{ij}\}$ be a normalized influence matrix, $a_{ij}>0, \ \sum_j a_{ij}=1$.  Fix $\theta>0$. The active set, $\Lambda_{p}(\theta)$, is the set of agents which influence ``$p$" more than $\theta$,
  \begin{equation}    \label{eq:active_set_p}
    \Lambda_{p}(\theta) := \{ j \ \big| \ a_{pj} \geq \theta\}.
  \end{equation}
  The global active set, $\Lambda(\theta)$, is the intersection of all the active sets at that level,
  \begin{equation}
    \label{eq:active_set_global}
    \Lambda(\theta) = \bigcap_{p} \Lambda_{p}(\theta). 
  \end{equation}
\end{definition}

This notion of active set, $\Lambda_p(\theta)$, defines a ``neighborhood"   for agent ``$p$", and can be generalized to more than just one agent. For example, 
\begin{equation}\label{eq:active_set_pq}
  \Lambda_{pq}(\theta):=\Lambda_p(\theta)\cap\Lambda_q(\theta),
\end{equation}
is the set of all agents whose influence on \emph{both}, ``$p$" and ``$q$", is larger than $\theta$, see figure \ref{fig:active_set}.

The number of agents in an active set $\Lambda_{\mathcal I}(\theta)$ is denoted by $\lambda_{\mathcal I}(\theta)$, e.g. $\lambda_{pq}(\theta) = |\Lambda_{pq}(\theta)|.$ The numbers $\{\lambda_{pq}(\theta)\}_{pq}$ are difficult to compute for general $\theta$'s: one needs to \emph{count} the number of pairs of agents in the underlying graph ${\mathcal G}$, which stay connected above level $\theta$,
\begin{equation}\label{eq:graph}
  {\mathcal G}_{i,j} = \left\{
    \begin{array}{ll}
      1 & \text{ if agent ``$i$" is influenced by ``$j$"}: a_{ij}>0,\\
      0 & \text{otherwise}.
    \end{array}
  \right.
\end{equation}
One simple case we can count, however, occurs when $\theta$ takes the minimal value, $\theta= \min_{ij}a_{ij}$. Then, the active sets $\Lambda_{p}(\theta)$ includes all the agents, $\Lambda_{p}(\theta)_{\theta=\min_{ij}a_{ij}} = \{1,\dots,\,N\}$, and since this applies for every "$p$", then $\Lambda_{pq}(\theta)$ and the global active set, $\Lambda(\theta)$, include all agents,
\begin{equation}   \label{eq:theta_1}
  \lambda(\theta)_{\displaystyle |\theta=\!\min_{ij} a_{ij}} \,=\, N.
\end{equation}

\begin{figure}[ht]
  \centering
  \includegraphics[scale=1]{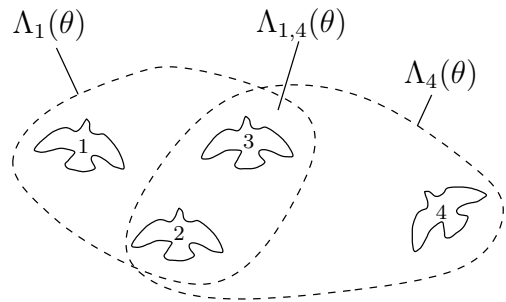}
  \caption{An illustration of  active sets. Here,
    $\Lambda_1(\theta) = \{1,\,2,\,3\}$ and $\Lambda_4(\theta)=\{2,\,3,\,4\}$. The pairwise active set, $\Lambda_{14}=\Lambda_1(\theta)\cap\Lambda_4(\theta)$, consists of  agents ``$2$" and ``$3$".}
  \label{fig:active_set}
\end{figure}

Armed with the notion of active set and with the key lemma \ref{lem:fundamental_lemma} on maximal action of antisymmetric matrices, we can now state our main result, measuring the decay of the diameters $d_X(t)$ and $d_V(t)$ in the dynamical system (\ref{eqs:average}).

\begin{theorem}
  \label{thm:ine_dx_dv}
  Let $\{{\bf x}_i(t),{\bf v}_i(t)\}_{i}$ be a solution of the dynamical system \eqref{eqs:average}. Fix an arbitrary $\theta>0$ and let $\lambda_{pq}(\theta)$ be the number of agents in the active sets, $\Lambda_{pq}(\theta)$, associated with the influence matrix of \eqref{eqs:average}.  Then the diameters of this solution, $d_X(t)$ and $d_V(t)$, satisfy,
  \begin{subequations}\label{eqs:evo_d_XV}  
    \begin{eqnarray}
      \label{eq:evo_d_X}
      \frac{d}{dt} d_X(t) & \leq & d_V(t) \\
      \label{eq:evo_d_V}
      \frac{d}{dt} d_V(t) & \leq & -\alpha\,\min_{pq}\lambda_{pq}^2(\theta)\,\theta^2 \,d_V(t).
    \end{eqnarray}
  \end{subequations}
\end{theorem}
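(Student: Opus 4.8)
The plan is to follow a maximizing pair of agents for each diameter and, for $d_V$, to reduce its evolution to the key Lemma~\ref{lem:fundamental_lemma}. Both $d_X(t)$ and $d_V(t)$ are maxima of finitely many smooth functions, hence locally Lipschitz and differentiable at a.e.\ $t$; at such a time I would fix a pair $(p,q)$ attaining the maximum and differentiate the single distance $|{\bf x}_q-{\bf x}_p|$ (resp.\ $|{\bf v}_q-{\bf v}_p|$), the non-maximizing pairs being harmless by an envelope/Danskin argument (equivalently, working with the upper Dini derivative). For the first estimate, differentiating $d_X=|{\bf x}_q-{\bf x}_p|$ and using ${d{\bf x}_i}/{dt}={\bf v}_i$ with Cauchy--Schwarz gives $\frac{d}{dt}d_X = \langle {\bf x}_q-{\bf x}_p,{\bf v}_q-{\bf v}_p\rangle/|{\bf x}_q-{\bf x}_p| \le |{\bf v}_q-{\bf v}_p|\le d_V$, which is \eqref{eq:evo_d_X}.

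For the second estimate I assume $d_V>0$ (otherwise the system is already a flock and \eqref{eq:evo_d_V} is trivial) and set ${\bf e}:=({\bf v}_q-{\bf v}_p)/d_V$. From \eqref{eq:averagea} the difference of accelerations is $\alpha[(\overline{\bf v}_q-\overline{\bf v}_p)-({\bf v}_q-{\bf v}_p)]$, so that
\[
\frac{d}{dt}d_V = \alpha\,\langle {\bf e},\,\overline{\bf v}_q-\overline{\bf v}_p\rangle - \alpha\,d_V .
\]
The crucial algebraic step, which manufactures the antisymmetric structure the lemma feeds on, is to use that each row of the influence matrix is a probability vector, $\sum_j a_{qj}=\sum_k a_{pk}=1$, so that the difference of the two convex averages can be symmetrized into a double sum,
\[
\overline{\bf v}_q-\overline{\bf v}_p = \sum_j a_{qj}{\bf v}_j-\sum_k a_{pk}{\bf v}_k = \sum_{j,k} a_{qj}a_{pk}({\bf v}_j-{\bf v}_k).
\]

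Projecting onto ${\bf e}$ and writing $s_j:=\langle {\bf e},{\bf v}_j\rangle$, I would obtain $\langle {\bf e},\overline{\bf v}_q-\overline{\bf v}_p\rangle = \sum_{j,k} S_{jk}\,a_{qj}a_{pk}=\langle Su,w\rangle$ with the antisymmetric matrix $S_{jk}:=s_j-s_k$ and the probability vectors $u=(a_{qj})_j$, $w=(a_{pk})_k$. Its entries obey $|S_{jk}|=|\langle{\bf e},{\bf v}_j-{\bf v}_k\rangle|\le|{\bf v}_j-{\bf v}_k|\le d_V=:M$, and $\overline{U}=\overline{W}=1$. Applying Lemma~\ref{lem:fundamental_lemma}, and noting that the set of entries active at level $\theta$ is exactly $\{j:a_{qj}\ge\theta\ \text{and}\ a_{pj}\ge\theta\}=\Lambda_p(\theta)\cap\Lambda_q(\theta)=\Lambda_{pq}(\theta)$ so that $\lambda(\theta)=\lambda_{pq}(\theta)$, I get $\langle{\bf e},\overline{\bf v}_q-\overline{\bf v}_p\rangle\le d_V(1-\lambda_{pq}^2(\theta)\theta^2)$. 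Substituting back cancels the bare $-\alpha d_V$ against the $\alpha d_V$ coming from the ``$1$'', leaving $\frac{d}{dt}d_V\le-\alpha\lambda_{pq}^2(\theta)\theta^2 d_V$; finally $\lambda_{pq}^2(\theta)\ge\min_{pq}\lambda_{pq}^2(\theta)$ yields \eqref{eq:evo_d_V}.

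I expect the main difficulty to be twofold. On the technical side, the diameters are only Lipschitz, so differentiation along a maximizing pair needs the envelope/Dini justification mentioned above, including the possibility that the maximizing pair switches. On the conceptual side --- and this is the real crux --- is the convexity-driven symmetrization of $\overline{\bf v}_q-\overline{\bf v}_p$ into $\sum_{j,k}a_{qj}a_{pk}({\bf v}_j-{\bf v}_k)$: this is exactly what turns a difference of two \emph{asymmetric} convex averages into the bilinear form $\langle Su,w\rangle$ of an antisymmetric $S$, making Lemma~\ref{lem:fundamental_lemma} applicable and identifying its ``active entries'' with the pairwise active set $\Lambda_{pq}(\theta)$.
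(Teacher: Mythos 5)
Your proposal is correct and follows essentially the same route as the paper: fix a maximizing pair $(p,q)$, symmetrize $\overline{\bf v}_q-\overline{\bf v}_p$ into the double sum $\sum_{j,k}a_{qj}a_{pk}({\bf v}_j-{\bf v}_k)$ using the row-stochastic normalization, and apply Lemma \ref{lem:fundamental_lemma} with $u=(a_{qj})_j$, $w=(a_{pk})_k$, identifying the lemma's active entries with $\Lambda_{pq}(\theta)$. The only differences are cosmetic: the paper works with $|{\bf v}_p-{\bf v}_q|^2$ and the kernel $S_{ij}=\langle{\bf v}_p-{\bf v}_q,{\bf v}_j-{\bf v}_i\rangle$ (so $M=d_V^2$) instead of your normalized projection onto ${\bf e}$, and it passes silently over the a.e.-differentiability point you handle via the Dini/envelope argument.
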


Since $\Lambda(\theta) \subset \Lambda_{p,q}(\theta)$ then $\lambda_{pq}(\theta)\geq \lambda(\theta)$ and (\ref{eq:evo_d_V}) yields the following global version of the theorem above.
\begin{theorem}\label{thm:ine_dx_dv_global}
  Fix an arbitrary $\theta>0$ and let $\lambda(\theta)$ be the number of agents in the global active set, $\Lambda(\theta)$, associated with \eqref{eqs:average}.  Then the diameters of its solution, $d_X(t)$ and $d_V(t)$, satisfy,
  \begin{subequations}\label{eqs:evo_d_XV_global}  
    \begin{eqnarray}
      \label{eq:evo_d_X_global}
      \frac{d}{dt} d_X(t) & \leq & d_V(t) \\
      \label{eq:evo_d_V_global}
      \frac{d}{dt} d_V(t) & \leq & -\alpha\,\lambda^2(\theta)\,\theta^2 \,d_V(t).
    \end{eqnarray}
  \end{subequations}
\end{theorem}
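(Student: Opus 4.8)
The plan is to deduce Theorem~\ref{thm:ine_dx_dv_global} from Theorem~\ref{thm:ine_dx_dv}, whose diameter bounds carry all the substantive content. The global version then follows immediately from the monotonicity already recorded: since $\Lambda(\theta)\subseteq\Lambda_{pq}(\theta)$ we have $\lambda_{pq}(\theta)\geq\lambda(\theta)$, so replacing $\min_{pq}\lambda_{pq}^2(\theta)$ by the smaller quantity $\lambda^2(\theta)$ in \eqref{eq:evo_d_V} only weakens the right-hand side, yielding \eqref{eq:evo_d_V_global}. I would therefore concentrate on proving \eqref{eqs:evo_d_XV}. The guiding observation is that both $d_X$ and $d_V$ are maxima of finitely many Lipschitz functions of $t$, hence locally Lipschitz and differentiable for a.e.\ $t$; at such a $t$ I fix a pair $(p,q)$ realizing the relevant maximum and differentiate the single active term, using that the upper Dini derivative of a finite maximum is bounded by the derivative of a maximizing component.

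The position bound \eqref{eq:evo_d_X} is immediate. If $d_X(t)=|\mathbf{x}_q-\mathbf{x}_p|$, then by Cauchy--Schwarz $\frac{d}{dt}|\mathbf{x}_q-\mathbf{x}_p| = \langle \mathbf{x}_q-\mathbf{x}_p,\mathbf{v}_q-\mathbf{v}_p\rangle/|\mathbf{x}_q-\mathbf{x}_p| \leq |\mathbf{v}_q-\mathbf{v}_p|\leq d_V(t)$. All the work is in \eqref{eq:evo_d_V}. For this, I fix $(p,q)$ with $d_V(t)=|\mathbf{v}_q-\mathbf{v}_p|>0$, introduce the unit vector $\mathbf{e}=(\mathbf{v}_q-\mathbf{v}_p)/d_V$ and the scalar projections $s_j=\langle\mathbf{e},\mathbf{v}_j\rangle$. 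Substituting the dynamics \eqref{eq:averagea}--\eqref{eq:averageb} into $\frac{d}{dt}|\mathbf{v}_q-\mathbf{v}_p|=\langle\mathbf{e},\dot{\mathbf{v}}_q-\dot{\mathbf{v}}_p\rangle$, and using $\langle\mathbf{e},\mathbf{v}_q-\mathbf{v}_p\rangle=d_V$ together with $\overline{\mathbf{v}}_q-\overline{\mathbf{v}}_p=\sum_j(a_{qj}-a_{pj})\mathbf{v}_j$, I obtain $\frac{d}{dt}d_V = -\alpha d_V + \alpha\sum_j(a_{qj}-a_{pj})s_j$, where the first term is the relaxation contribution and the second is the averaging contribution. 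It then suffices to prove $\sum_j(a_{qj}-a_{pj})s_j\leq(1-\lambda_{pq}^2(\theta)\theta^2)d_V$, since this gives exactly $\frac{d}{dt}d_V\leq-\alpha\lambda_{pq}^2(\theta)\theta^2 d_V\leq-\alpha\min_{pq}\lambda_{pq}^2(\theta)\theta^2 d_V$.

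This last estimate is where Lemma~\ref{lem:fundamental_lemma} enters, and choosing its inputs correctly is the crux of the argument. I take $u=(a_{p1},\dots,a_{pN})$ and $w=(a_{q1},\dots,a_{qN})$, which are nonnegative with $\overline{U}=\overline{W}=1$ by the normalization $\sum_j a_{ij}=1$; with these choices the lemma's active set $\{j: u_j\geq\theta\overline{U},\ w_j\geq\theta\overline{W}\}$ is precisely $\Lambda_{pq}(\theta)$, so its cardinality is $\lambda_{pq}(\theta)$. For the antisymmetric matrix I take $S_{ij}=s_i-s_j$, whose entries satisfy $|S_{ij}|=|\langle\mathbf{e},\mathbf{v}_i-\mathbf{v}_j\rangle|\leq|\mathbf{v}_i-\mathbf{v}_j|\leq d_V$, so that $M=d_V$. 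A one-line computation using $\overline{U}=\overline{W}=1$ gives $\langle Su,w\rangle=\sum_i s_i a_{pi}-\sum_j s_j a_{qj}=-\sum_j(a_{qj}-a_{pj})s_j$, and Lemma~\ref{lem:fundamental_lemma} bounds the modulus of this by $M\overline{U}\,\overline{W}(1-\lambda_{pq}^2(\theta)\theta^2)=d_V(1-\lambda_{pq}^2(\theta)\theta^2)$, which is the desired inequality.

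The main obstacle is exactly this translation: recasting the analytic time-derivative as the bilinear form $\langle Su,w\rangle$ with the correct antisymmetric $S$ and the correct probability vectors $u,w$, so that the combinatorial gain $\lambda_{pq}^2(\theta)\theta^2$ produced by the lemma is faithfully captured (and so that the normalization forces $\overline{U}=\overline{W}=1$, turning the factor $M\overline{U}\,\overline{W}$ into $d_V$). The remaining issues are routine: the differentiability of $d_X$ and $d_V$ for a.e.\ $t$ via their local Lipschitz continuity, and the degenerate case $d_V=0$, where $\mathbf{e}$ is undefined but $\frac{d}{dt}d_V\leq0$ holds trivially since all velocities have already aligned.
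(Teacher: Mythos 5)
Your proposal is correct and takes essentially the same route as the paper: the global statement is deduced from the pairwise one via $\lambda_{pq}(\theta)\geq\lambda(\theta)$, the position bound is identical, and the velocity bound comes from expressing the difference of averages $\overline{\bf v}_q-\overline{\bf v}_p$ as a bilinear form in the convex weights and invoking Lemma \ref{lem:fundamental_lemma} with $u_i=a_{pi}$, $w_i=a_{qi}$, so that the lemma's active set is exactly $\Lambda_{pq}(\theta)$ with $\overline{U}=\overline{W}=1$. The only cosmetic difference is that you differentiate $d_V$ itself, projecting on the unit vector ${\bf e}$ so that $S_{ij}=s_i-s_j$ has $|S_{ij}|\leq d_V$, whereas the paper differentiates $d_V^2$ with $S_{ij}=\langle {\bf v}_p-{\bf v}_q,{\bf v}_j-{\bf v}_i\rangle$ bounded by $d_V^2$.
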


\begin{proof}[Proof of theorem \ref{thm:ine_dx_dv}]
  We fix our attention to two trajectories ${\bf x}_p(t)$ and ${\bf x}_q(t)$, where $p$ and $q$ will be determined later. Their relative distance satisfies,
  \begin{equation*}
    \frac{d}{dt} |{\bf x}_p-{\bf x}_q|^2 = 2 \langle {\bf x}_p\!-\!{\bf x}_q,{\bf
      v}_p\!-\!{\bf v}_q\rangle  \;\leq \, 2 |{\bf x}_p\!-\!{\bf x}_q| |{\bf v}_p\!-\!{\bf v}_q|,
  \end{equation*}
  which implies,
  \begin{equation*}
    \frac{d}{dt} |{\bf x}_p(t)-{\bf x}_q(t)| \leq d_V(t).
  \end{equation*}
  Thus, (\ref{eq:evo_d_X}) holds. Next, we turn to study the corresponding relative distance in velocity phase space,
  \begin{eqnarray}\label{eq:xyz}
    \frac{d}{dt}\,|{\bf v}_p-{\bf v}_q|^2 &=& 2\alpha\,\langle {\bf v}_{p} \!-\! {\bf v}_{q}\,, \dot{{\bf v}}_{p} \!-\! \dot{{\bf v}}_{q}\rangle  \\
    &=& 2\alpha\langle {\bf v}_{p} \!-\! {\bf v}_{q}\,, \overline{{\bf v}}_{p} \!-\! \overline{{\bf v}}_{q}\rangle  \,- \;2\alpha\,|{\bf v}_{p} - {\bf v}_{q}|^2;\nonumber
  \end{eqnarray}
  recall that ${\bf \overline{v}}_p$ and ${\bf \overline{v}}_p$ are the average velocities defined in (\ref{eq:eitan_averageb}). Given that $\sum_\ell a_{k\ell}\equiv 1$, the difference of these averages is given by,
  \begin{eqnarray*}
    \overline{{\bf v}}_p-\overline{{\bf v}}_q & = &  \sum_j a_{pj} {\bf v}_j-
    \overline{{\bf v}}_q = \sum_j a_{pj} \big({\bf v}_j-
    \overline{{\bf v}}_q\big) \\
    & = &  \sum_j a_{pj} \Big({\bf v}_j- \sum_i
    a_{qi} {\bf v}_i\Big) =  \sum_j \sum_i a_{pj}
    a_{qi}\big({\bf v}_j- {\bf v}_i\big).
  \end{eqnarray*}
  Inserting this into (\ref{eq:xyz}), we find,
  \begin{equation}
    \label{eq:vp_vq}
    \frac{d}{dt}\,|{\bf v}_p-{\bf v}_q|^2 = 2\alpha \left(\sum_{ij} a_{pj} a_{qi} \langle {\bf
        v}_p\!-\!{\bf v}_q,\,{\bf v}_j\!-\!{\bf v}_i\rangle  \;\;-\;\;|{\bf v}_{p} - {\bf
        v}_{q}|^2\right).
  \end{equation}
  To upper-bound the first quantity on the right, we use the lemma \ref{lem:fundamental_lemma} with $u_i = a_{pi}$, $w_i=a_{qi}$ and the antisymmetric matrix \mbox{$S_{i,j}= \langle {\bf v}_p\!-\!{\bf v}_q,\,{\bf v}_j\!-\!{\bf v}_i\rangle$}: since $|S_{i,j}|\leq d_V^2$, $\overline{U} = \sum_i u_i=1$ and $\overline{W}= \sum_i w_i =1$, we deduce,
  \begin{displaymath}
    \Big|\sum_{ij} a_{pj} a_{qi} \langle {\bf v}_p\!-\!{\bf v}_q,\,{\bf v}_j\!-\!{\bf v}_i\rangle \Big| \leq d_V^2 (1 - \lambda_{pq}^2(\theta)\, \theta^2).
  \end{displaymath}
  Here, $\lambda_{pq}(\theta)$ is the number of agents in the active set $\Lambda_{pq}(\theta)$,
  \begin{displaymath}
    \lambda_{pq}(\theta) = |\{ j \ \big| \ a_{pj} \geq \theta \text{ and } a_{qj} \geq \theta\}|.
  \end{displaymath}
  Therefore, the relative velocity ${\bf v}_p-{\bf v}_q$ in (\ref{eq:vp_vq}) satisfies,
  \begin{displaymath}
    \frac{d}{dt}\,|{\bf v}_p-{\bf v}_q|^2 \leq 2\alpha \Big(d_V^2 (1 - \lambda_{pq}^2(\theta)\, \theta^2) \;\;-\;\;|{\bf v}_{p} - {\bf v}_{q}|^2\Big).
  \end{displaymath}
  In particular, if we choose $p$ and $q$ such that $ |{\bf v}_p(t)-{\bf v}_q(t)|=d_V(t)$,
  the last inequality reads,
  \begin{equation}
    \label{eq:evo_d_V_temp}
    \frac{d}{dt}\,d_V^2(t) \leq -2\alpha\,\min_{pq}\lambda_{pq}^2(\theta)\,\theta^2 \,d_V^2(t).
  \end{equation}
  and the inequality (\ref{eq:evo_d_V}) follows.
\end{proof}

\begin{remark}\label{rem:v_is_bounded}
  Equation \eqref{eq:evo_d_V} tells us that the diameter in velocity phase space, $d_V(t)$, is decreasing in time. In fact, an even stronger statement holds, namely, if we let $\Omega(t)$ denote the convex hull of the velocities, $\Omega(t) := \text{Conv}\left(\{{\bf v}_i(t)\}_{i=1 \dots N}\right)$, then $\Omega(t)$ is decreasing in time in the sense of set inclusion,
  \begin{equation}
    \label{eq:Omega_decreasing}
    \Omega(t_1) \supset \Omega(t_2) \; \text{ if } t_1\leq t_2.
  \end{equation}
  Indeed, by convexity, $\overline{{\bf v}}_i \in \Omega(t)$ for any $i$, and consequently, if ${\bf v}_i$ is at the frontier of $\Omega$, then the vector $(\overline{{\bf v}}_i - {\bf v}_i)$ points to the interior of $\Omega$ at ${\bf v}_i$, see figure \ref{fig:normal_vector}. More precisely, if ${\bf n}$ denotes the outward-pointing normal to $\Omega$ at ${\bf v}_i$, then $\dot{{\bf v}}_i \cdot {\bf n} = (\overline{{\bf v}}_i-{\bf v}_i)\cdot{\bf n} \leq 0$ Therefore, the frontier of $\Omega(t)$ is a ``fence'' \cite{hubbard_differential_1995} for the vectors ${\bf v}_i(t)$ and \eqref{eq:Omega_decreasing} follows.

  \begin{figure}[ht]
    \centering
    \includegraphics[scale=1]{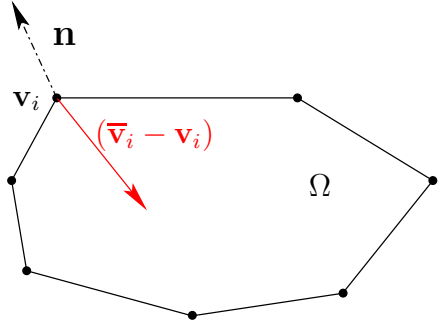}
    \caption{At the frontier of the convex hull $\Omega$, the vector
      $(\overline{{\bf v}}_i-{\bf v}_i)$ points to the interior of $\Omega$. Thus, for any outward-pointing
      normal vector ${\bf n}$ at ${\bf v}_i$, we have: $\dot{{\bf v}}_i\cdot{\bf n}=(\overline{{\bf
          v}}_i-{\bf v}_i)\cdot{\bf n}\leq 0$.}
    \label{fig:normal_vector}
  \end{figure}

  The bound of $d_V(t)$ implies that the spatial diameter of the flock, $d_X(t)$ grows at most linearly in time. Indeed, for agents ``$p$" and ``$q$" which realize the maximal distance, $d_X(t)=|\bx_p(t)-\bx_q(t)|$, we have
  \[
  \frac{d}{dt}d_X(t)\leq |{\bf v}_p(t)-{\bf v}_q(t)| \leq d_V(t),
  \] 
  and hence $d_X(t) \leq d_X(0)+ d_V(0)t$.
\end{remark}

Theorem \ref{thm:ine_dx_dv} and \ref{thm:ine_dx_dv_global} will be used to prove the flocking behavior of general systems of the type  (\ref{eqs:average}). The key point will be to make the judicious choice for the level $\theta=\theta(d_X(t))$, to enforce the convergence $d_V(t)\rightarrow 0$ through the inequalities (\ref{eqs:evo_d_XV}), (\ref{eqs:evo_d_XV_global}). In this context we are led to consider  dynamical inequalities of the form,
\begin{subequations}\label{eqs:HL}
  \begin{eqnarray}
    \frac{d}{dt}d_X(t) &\leq & d_V(t),\\
    \frac{d}{dt}d_V(t) & \leq & -\alpha\psi(d_X(t))d_V(t).
  \end{eqnarray}
\end{subequations}
The long time behavior of such systems is dictated by the properties of $\psi(\cdot)>0$.

\begin{lemma}\label{lem:HL}
  Consider the diameters $d_X(t),d_V(t)$ governed by the inequalities \eqref{eqs:HL}, where $\psi(\cdot)$ is a positive function such that,
  \begin{subequations}\label{eqs:psi}
    \begin{equation}\label{eq:psi}
      d_V(0) \leq \int^\infty_{d_X(0)} \psi(r)dr.
    \end{equation}
    Then the underlying dynamical system convergences to a flock in the sense that  \eqref{eq:cond_flock} holds,
    \[
    \sup_{t\geq 0}d_X(t) <+\infty \ \ \text{and} \ \ \lim_{t\rightarrow +\infty}d_V(t)=0.
    \]
    In particular, if $\psi(\cdot)$ has a diverging tail,
    \begin{equation}\label{eq:psi_diverge}
      \int^\infty \psi(r)dr = \infty,
    \end{equation}
  \end{subequations}
  then there is unconditional flocking.
\end{lemma}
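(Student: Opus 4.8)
The plan is to turn the system of differential inequalities \eqref{eqs:HL} into a single scalar relation that decouples the two diameters. The crucial observation is that the second inequality controls how fast $d_V$ decays in terms of $d_X$, while the first says $d_X$ grows no faster than $d_V$. The natural device is to define a Lyapunov-type functional that combines them. I would introduce
\[
\mathcal{E}(t) := d_V(t) + \alpha\int_{d_X(0)}^{d_X(t)} \psi(r)\,dr,
\]
and show it is nonincreasing. Differentiating, $\frac{d}{dt}\mathcal{E}(t) = \frac{d}{dt}d_V(t) + \alpha\,\psi(d_X(t))\,\frac{d}{dt}d_X(t) \leq -\alpha\,\psi(d_X(t))\,d_V(t) + \alpha\,\psi(d_X(t))\,d_V(t) = 0$, where the first inequality uses \eqref{eqs:HL} together with the positivity of $\psi$ (so that multiplying the bound $\frac{d}{dt}d_X \leq d_V$ by $\alpha\psi(d_X)\geq 0$ preserves the inequality). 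Hence $\mathcal{E}(t)\leq \mathcal{E}(0) = d_V(0)$ for all $t\geq 0$.

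From the monotonicity of $\mathcal{E}$, both conclusions follow. Since $d_V(t)\geq 0$, the bound $\mathcal{E}(t)\leq d_V(0)$ gives
\[
\alpha\int_{d_X(0)}^{d_X(t)} \psi(r)\,dr \leq d_V(0) \leq \int_{d_X(0)}^{\infty}\psi(r)\,dr,
\]
using the hypothesis \eqref{eq:psi} (taking $\alpha=1$ in the normalization, or absorbing it; I would state the clean version matching the hypothesis). Because $\psi>0$, the map $R\mapsto \int_{d_X(0)}^{R}\psi(r)\,dr$ is strictly increasing, and the displayed bound forces $d_X(t)$ to stay below the finite threshold $R^\ast$ determined by $\int_{d_X(0)}^{R^\ast}\psi = d_V(0)/\alpha$; this yields $\sup_{t\geq 0} d_X(t) < +\infty$, the first flocking condition.

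For the decay of $d_V$, I would argue that $d_X(t)$ is monotone nondecreasing (its derivative is bounded by $d_V\geq0$, but more directly it is bounded above and increasing toward a finite limit $d_X^\infty \leq R^\ast$), so $\psi(d_X(t))$ is bounded below by a positive constant $c := \inf_{0\leq r\leq R^\ast}\psi(r)>0$ on the relevant range, provided $\psi$ is bounded away from zero on compact intervals. Then \eqref{eqs:HL} gives $\frac{d}{dt}d_V(t)\leq -\alpha c\, d_V(t)$, whence $d_V(t)\leq d_V(0)e^{-\alpha c t}\to 0$, establishing the second condition and in fact exponential decay. The unconditional case is immediate: if $\int^\infty\psi = \infty$, then \eqref{eq:psi} holds for \emph{every} initial datum since the right-hand side is infinite, so flocking occurs regardless of $d_V(0)$ and $d_X(0)$.

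The step I expect to require the most care is the positivity of the lower bound $c$ on $\psi$ over the trajectory's range, since the lemma's hypotheses only assume $\psi>0$ pointwise, not a uniform positive lower bound. The monotone behavior in the concrete application (where $\psi(r)=\lambda^2(\theta(r))\theta^2(r)$ arises from a decreasing influence function $\phi$) should guarantee that $\psi$ is itself monotone, so that $c=\psi(d_X^\infty)>0$; I would flag this as the natural regularity assumption and note that the Lyapunov bound on $d_X$ is what makes this compactness argument legitimate. The remaining pieces—monotonicity of $\mathcal{E}$ and the strict-monotonicity inversion giving the uniform bound on $d_X$—are routine once the functional is written down.
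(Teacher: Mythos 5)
Your proof is correct and follows essentially the same route as the paper: the identical Lyapunov functional (the Ha--Liu energy method), the same inversion of the hypothesis to produce a finite threshold $R^\ast$ (the paper's $d_*$) bounding $d_X(t)$, and the same Gronwall argument giving exponential decay of $d_V$. The caveat you flag --- that $\inf_{0\leq r\leq R^\ast}\psi(r)>0$ requires more than pointwise positivity of $\psi$ --- is glossed over in the paper as well (it holds in the application since $\psi=\phi^2$ with $\phi$ decreasing), and your parenthetical claim that $d_X(t)$ is monotone nondecreasing is neither implied by \eqref{eqs:HL} nor needed, since only the uniform bound $d_X(t)\leq R^\ast$ enters the argument.
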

\begin{proof}
  We apply the energy method introduced by Ha and Liu \cite{ha_simple_2009}. Consider the ``energy functional'', $\mathcal{E}={\mathcal E}(t)$,
  \begin{equation}
    \label{eq:phi}
    \mathcal{E}(d_X,d_V)(t):= d_V(t) + \alpha  \int_0^{d_X(t)} \psi(s)\,ds.
  \end{equation}
  The energy $\mathcal{E}$ is decreasing along the trajectory $(d_X,d_V)$, 
  \begin{eqnarray*}
    \frac{d}{dt}\, \mathcal{E}(d_X,d_V) &=& \dot{d_V} + \alpha \psi(d_X) \dot{d_X} 
    \leq  -\alpha\psi(d_X) d_V +  \alpha\psi(d_X) d_V \, = 0,
  \end{eqnarray*}
  and we deduce that,
  \begin{equation}
    \label{eq:super_inequality}
    d_V(t) -d_V(0) \leq  -\alpha\int_{d_X(0)}^{d_X(t)} \psi(s)\,ds.
  \end{equation}
  By our assumption (\ref{eq:psi}), there exists $d_*>0$ (independent of $t$), such that,
  \begin{equation}\label{eq:key}
    d_V(0) = \alpha\int_{d_X(0)}^{d_*} \psi(s)\,ds,
  \end{equation}
  and  the inequality (\ref{eq:super_inequality}) now reads,
  \begin{equation*}
    d_V(t) \leq \alpha\int_{d_X(0)}^{d_*} \psi(s)\,ds - \alpha\int_{d_X(0)}^{d_X(t)} \psi(s)\,ds = \alpha\int_{d_X(t)}^{d_*} \psi(s)\,ds.
  \end{equation*}
  Since $d_V(t)\geq0$, we conclude that we have a flock with a  \emph{uniformly  bounded} diameter,
  \begin{equation}
    \label{eq:upper_bound_x_M}
    d_X(t) \leq d_* \quad \text{ for all } t\geq0,
  \end{equation}
  thus improving the linear growth noted in remark \ref{rem:v_is_bounded}.  The uniform bound on $d_X(t)$ in (\ref{eq:upper_bound_x_M}) implies that the velocity phase space of this flock shrinks as the diameter $d_V(t)$ converges to zero. Indeed, the inequality (\ref{eq:evo_d_V_theta1}) yields,
  \begin{eqnarray*}
    \frac{d}{dt} d_V(t) & \leq & -\alpha \psi_* \cdot d_V(t), \qquad \psi_*:=\min_{0\leq r \leq d_*}\psi(r)>0,
  \end{eqnarray*}
  and  Gronwall's inequality proves that $d_V(t)$ converges exponentially fast to zero.
\end{proof}

\section{Flocking for the proposed model}\label{sec:new-model-flock}
\setcounter{equation}{0}
In this section we prove that the model (\ref{eq:eitan}) converges to a flock under the assumption that the pairwise influence, $\phi(|\bx_j-\bx_i|)$, decays slowly enough so that $\phi(\cdot)$, has a non square-integrable tail, (\ref{eq:phi2_not_integrable}), $\int^\infty \phi^2(r)dr=\infty$. In section \ref{sec:leader}, we show that the same result carries over the dynamics of strongly asymmetric models with leader(s). 
We will conclude, in section \ref{subsec:C-S-flock}, by revisiting the flocking behavior of the C-S model.

\subsection{Flocking of the proposed model}\label{subsec:new-model-flock}

\begin{theorem}\label{thm:flock_particle}
  Consider the  model for self-organized dynamics \eqref{eq:eitan} and assume that  its influence function $\phi$ satisfies,
  \begin{subequations}\label{eqs:non_square_int}
    \begin{equation}\label{eq:non_square_int_local}
      d_V(0) \leq \int_{d_X(0)}^\infty \phi^2(r)dr.
    \end{equation}
    Then, its solution, $\{({\bf x}_i(t),{\bf v}_i(t))\}_{i}$, converges to a flock in the sense that \eqref{eq:cond_flock} holds. In particular, there is unconditional flocking if $\phi^2$ has a diverging tail,
    \begin{equation}
      \label{eq:non_square_int}
      \int^\infty \phi^2(r)\,dr = +\infty.
    \end{equation}
  \end{subequations}
\end{theorem}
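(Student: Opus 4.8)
The plan is to feed the proposed model directly into the general machinery already assembled. Theorem \ref{thm:ine_dx_dv_global} reduces the flocking question to the single scalar inequality \eqref{eq:evo_d_V_global}, namely $\frac{d}{dt}d_V(t)\le-\alpha\lambda^2(\theta)\theta^2\,d_V(t)$, and Lemma \ref{lem:HL} then converts such an inequality into flocking once it is recast in the form \eqref{eqs:HL} with an integrable-tail condition on $\psi$. So the whole task reduces to choosing the free level $\theta=\theta(d_X(t))$ cleverly and producing a good lower bound on the size $\lambda(\theta)$ of the global active set.

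First I would write the model's influence matrix explicitly, $a_{pj}=\phi(|{\bf x}_p-{\bf x}_j|)/\sum_k\phi(|{\bf x}_p-{\bf x}_k|)$, and bound each entry from below uniformly in $p,j$. Since $\phi$ is decreasing with $\phi(0)=1$, the numerator satisfies $\phi(|{\bf x}_p-{\bf x}_j|)\ge\phi(d_X(t))$ because $|{\bf x}_p-{\bf x}_j|\le d_X(t)$, while the denominator satisfies $\sum_k\phi(|{\bf x}_p-{\bf x}_k|)\le N$ because $\phi\le 1$. Hence $a_{pj}\ge\phi(d_X(t))/N$ for all $p,j$ (including the diagonal entries).

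The decisive step is then the choice $\theta=\phi(d_X(t))/N$. With this level every entry $a_{pj}$ exceeds $\theta$, so the global active set $\Lambda(\theta)$ contains all agents and $\lambda(\theta)=N$. The crucial cancellation now occurs: $\lambda^2(\theta)\theta^2=N^2\cdot\phi^2(d_X(t))/N^2=\phi^2(d_X(t))$, which is independent of $N$ --- precisely the $N$-free behavior the model was designed to exhibit. Substituting into \eqref{eq:evo_d_V_global} yields $\frac{d}{dt}d_V(t)\le-\alpha\,\phi^2(d_X(t))\,d_V(t)$, that is, the dynamical system \eqref{eqs:HL} with $\psi=\phi^2$.

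Finally I would invoke Lemma \ref{lem:HL} with this $\psi$. Its hypothesis \eqref{eq:psi} reads $d_V(0)\le\int_{d_X(0)}^\infty\phi^2(r)\,dr$, which is exactly the standing assumption \eqref{eq:non_square_int_local}, so (conditional) flocking follows; and the diverging-tail condition \eqref{eq:psi_diverge} specializes to \eqref{eq:non_square_int}, giving unconditional flocking. I do not expect a serious obstacle here: the only point requiring care is that the entrywise lower bound on $a_{pj}$ be uniform enough to guarantee $\lambda(\theta)=N$ at the chosen $\theta$, and that the two powers of $N$ cancel cleanly so that the resulting $\psi=\phi^2$ is genuinely $N$-independent --- otherwise the flocking threshold would deteriorate as the number of agents grows, defeating the purpose of the relative-distance normalization.
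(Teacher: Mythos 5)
Your proposal is correct and follows essentially the same route as the paper's own proof: the identical entrywise bound $a_{ij}\ge\phi(d_X(t))/N$, the same choice $\theta(t)=\phi(d_X(t))/N$ making $\lambda(\theta)=N$ so that $\lambda^2(\theta)\theta^2=\phi^2(d_X(t))$, followed by Theorem \ref{thm:ine_dx_dv_global} and Lemma \ref{lem:HL} with $\psi=\phi^2$. No gaps; the cancellation of the two powers of $N$ you highlight is exactly the mechanism the paper relies on.
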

\begin{proof}
  Since   $\phi(d_X)\leq \phi_{ij} \leq 1$, the alignment coefficients $a_{ij}$ in (\ref{eqs:eitan_average}) are lower-bounded by
  \begin{displaymath}
    a_{ij} = \frac{\phi(|{\bf x}_j- {\bf x}_i|)}{\sum_k \phi(|{\bf x}_k- {\bf x}_i|)} \geq \frac{\phi(d_X)}{N}.
  \end{displaymath}
  We now set  $\theta$ to be this lower-bound of the $a_{ij}$\!'s,
  \[
  \theta(t):=\frac{\phi(d_X(t))}{N},
  \]
  so that the global active set at that level, $\Lambda(\theta(t))$, include \emph{all} agents. Thus, as noted already in  (\ref{eq:theta_1}), $\lambda(\theta)=N$, and the global version of our main theorem  \ref{thm:ine_dx_dv_global} yields,
  \begin{eqnarray*}
    \frac{d}{dt} d_X(t) & \leq & d_V(t) \\
    \frac{d}{dt} d_V(t) & \leq & -\alpha\,\phi^2(d_X(t)) \,d_V(t).
  \end{eqnarray*}
  The result follows from lemma \ref{lem:HL} with $\psi(r)=\phi^2(r)$.
\end{proof}

\begin{remark}\label{rem:differences}
  Theorem  \ref{thm:flock_particle} tells us that the model (\ref{eq:eitan}) admits an asymptotic flocking velocity, ${\bf v}_\infty$ 
  \[
  \lim {\bf v}_i(t) = {\bf v}_\infty.
  \]
  In contrast to the C-S model, however, our model does not seem to posses any invariants which will enable to relate ${\bf v}_\infty$ to the initial condition, beyond the fact noted in remark \ref{rem:v_is_bounded}, that ${\bf v}_\infty$ belongs to the convex hull $\Omega(0)$. We can therefore talk about the \emph{emergence} in the new model, in the sense that the asymptotic velocity of its flock, ${\bf v}_\infty$, is encoded in the dynamics of the system and not just as an invariant of its initial configuration.  Whether ${\bf v}_\infty$ can be computed from the initial configuration remains an open question.
\end{remark}

\subsection{Flocking with a leader}\label{sec:leader}

In this section, we discuss the dynamical systems  with (one or more) leaders.
\begin{definition}\label{defi:leader}
  Consider the dynamical system \eqref{eqs:eitan_average}. An agent ``$p$" is a leader if there exists $\beta>0$, independent of $N$, such that:
  \begin{equation}\label{eq:leader}
    a_{ip}(t) \geq \beta \phi(|\bx_p-\bx_i|), \quad \text{for every } i.
  \end{equation}
\end{definition}
In other words, an agent ``$p$" is viewed as a leader if its influence on aligning all other agents ``$i$", is decreasing with distance, but otherwise, is \emph{independent} of the number of agents, $N$. We illustrate this definition, see figure \ref{fig:sheep_herder}, with the following dynamical system: a leader ``$p$" moves with a constant velocity and influences the rest of the agents with a non-vanishing amplitude $0<\beta<1$,
\begin{subequations}\label{eqs:leader}
  \begin{equation}\label{eq:ex_leader}
    \frac{d{\bf x}_i}{dt} = {\bf v}_i, \qquad \frac{d{\bf v}_i}{dt} = \alpha\sum_{j\ne i}a_{ij}({\bf v}_j-{\bf v}_i),
  \end{equation}
  where
  \begin{equation}\label{eq:leaderb}
    a_{pj} = 0, \qquad a_{ij \big| i\ne p}=\left\{\begin{array}{ll} 
        \displaystyle \beta\phi(|\bx_p-\bx_i|), & j=p,\\ \\
        \displaystyle \frac{1-\beta}{N}\phi(|\bx_j-\bx_i|) & j\ne p.
      \end{array}\right.
  \end{equation} 
\end{subequations}


\begin{figure}[ht]
  \centering
  \includegraphics[scale=.35]{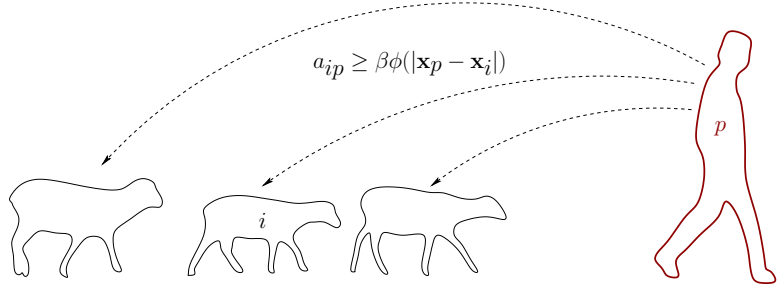}
  \caption{The agent $p$ (herder) is a leader in the sense of definition (\ref{defi:leader}). He influences every other agents (sheep) more than a certain quantity \mbox{$\beta\phi(|\bx_p-\bx_i|)$}.}
  \label{fig:sheep_herder}
\end{figure}

We note that there could be one or more leaders. The presence  of  leader(s) in the dynamical system (\ref{eqs:eitan_average}) is of course typical to asymmetric systems. We use the approach outlined above to prove that the existence of one (or more) leaders, enforces flocking.

\begin{theorem}
  Let $\{{\bf x}_i(t),{\bf v}_i(t)\}$ the solution of the dynamical system \eqref{eqs:eitan_average} and assume it has one or more leaders in the sense that \eqref{eq:leader} holds.  Then $\{{\bf x}_i(t),{\bf v}_i(t)\}$ admits a conditional and respectively, unconditional flocking provided \eqref{eq:non_square_int_local} and respectively, \eqref{eq:non_square_int} hold.
\end{theorem}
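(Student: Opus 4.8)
The plan is to reduce the leader case to the general machinery of Theorem~\ref{thm:ine_dx_dv_global} and Lemma~\ref{lem:HL} by showing that the presence of a leader forces a lower bound on the \emph{global} active set $\lambda(\theta)$ that does not deteriorate with $N$. The key observation is that the normalization condition $\sum_{j\neq i} a_{ij}\le 1$ together with the convention $a_{ii}=1-\sum_{j\neq i}a_{ij}$ already places us inside the framework of \eqref{eqs:average}, so the diameter inequalities \eqref{eqs:evo_d_XV_global} hold verbatim. What remains is to choose the level $\theta=\theta(d_X(t))$ so that $\lambda^2(\theta)\theta^2\gtrsim \phi^2(d_X(t))$, after which the result drops out of Lemma~\ref{lem:HL} with $\psi(r)=\phi^2(r)$.

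First I would set $\theta(t):=\beta\,\phi(d_X(t))$. By the leader hypothesis \eqref{eq:leader}, for every agent ``$i$" we have $a_{ip}\ge \beta\phi(|{\bf x}_p-{\bf x}_i|)\ge\beta\phi(d_X(t))=\theta(t)$, since $\phi$ is decreasing and $|{\bf x}_p-{\bf x}_i|\le d_X(t)$. Hence the leader ``$p$" belongs to \emph{every} individual active set $\Lambda_i(\theta(t))$, and therefore to the global active set $\Lambda(\theta(t))=\bigcap_i\Lambda_i(\theta(t))$. This gives the crucial $N$-independent lower bound $\lambda(\theta(t))\ge 1$. Substituting into \eqref{eq:evo_d_V_global} yields
\[
\frac{d}{dt}d_V(t)\ \le\ -\alpha\,\lambda^2(\theta)\,\theta^2\,d_V(t)\ \le\ -\alpha\,\beta^2\,\phi^2(d_X(t))\,d_V(t),
\]
together with $\tfrac{d}{dt}d_X(t)\le d_V(t)$. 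This is precisely the dynamical system \eqref{eqs:HL} with $\psi(r)=\beta^2\phi^2(r)$.

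Finally I would invoke Lemma~\ref{lem:HL}. The conditional flocking statement follows once the initial data satisfy $d_V(0)\le\alpha\int_{d_X(0)}^\infty\beta^2\phi^2(r)\,dr$, which (absorbing the harmless constant $\alpha\beta^2$ by rescaling, or simply tracking it through the energy functional \eqref{eq:phi}) is the content of \eqref{eq:non_square_int_local}; the unconditional case follows from the diverging-tail hypothesis \eqref{eq:non_square_int}, $\int^\infty\phi^2(r)\,dr=\infty$, since then $\int^\infty\beta^2\phi^2(r)\,dr=\infty$ as well and the integrability condition \eqref{eq:psi} holds for arbitrary initial data. The case of several leaders is no harder: each leader sits in the global active set, so $\lambda(\theta(t))\ge 1$ remains valid (indeed $\lambda\ge$ the number of leaders), and the same estimate applies.

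The main obstacle I anticipate is bookkeeping rather than conceptual: one must verify that the leader's defining inequality \eqref{eq:leader} is compatible with the normalization \eqref{eq:eitan_averageb} so that the rewriting \eqref{eqs:average} is legitimate, and one must track the constants $\beta^2$ (and $\alpha$) carefully through Lemma~\ref{lem:HL} to recover exactly the stated thresholds \eqref{eq:non_square_int_local}–\eqref{eq:non_square_int}. A subtler point worth a remark is that here, unlike in the fully symmetric or self-normalized model, flocking does \emph{not} require all agents to remain mutually influential --- it suffices that a single distinguished agent retains a nonvanishing, $N$-independent influence on the whole group, which is exactly what the active-set formulation makes transparent.
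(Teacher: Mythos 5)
Your proposal is correct and follows essentially the same route as the paper's proof: set $\theta=\beta\phi(d_X(t))$, observe via \eqref{eq:leader} that the leader belongs to every active set $\Lambda_i(\theta)$ and hence to the global active set, so $\lambda(\theta)\geq 1$, then feed the resulting inequalities into Lemma \ref{lem:HL} with $\psi(r)=\phi^2(r)$. Your extra care in tracking the constant $\beta^2$ through the hypothesis of Lemma \ref{lem:HL} is, if anything, slightly more precise than the paper, which silently absorbs $\beta^2$ into the choice of $\psi$.
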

\begin{proof}
  We set $\theta=\beta\phi(d_X(t))$. Then the leader ``$p$" belongs to \emph{all} active sets, $\Lambda_i(\theta)$, and in particular, ``$p$" belongs to the global active set $\Lambda(\theta)$. Thus,  $\lambda(\theta)\geq1$. The inequalities (\ref{eqs:evo_d_XV}) yield
  \begin{eqnarray*}
    \frac{d}{dt} d_X(t) & \leq & d_V(t) \\
    \frac{d}{dt} d_V(t) & \leq & - \alpha \beta^2 \phi^2(d_X(t))d_V(t).
  \end{eqnarray*}
  We now apply lemma \ref{lem:HL} with $\psi(r)=\phi^2(r)$ to conclude.
\end{proof}
\begin{remark}
  If the leader $p$ is not influenced by the other agents, then one deduces that the asymptotic velocity of the flock ${\bf v}_\infty$ will be the velocity of the leader ${\bf v}_p$. But we emphasize that in the general case of having more than one leader the asymptotic velocity of the flock emerges through the dynamics of \eqref{eqs:eitan_average}, and as with the model \eqref{eq:eitan}, it may not be encoded solely in the initial configuration.
\end{remark}

\subsection{Flocking of the C-S model revisited}\label{subsec:C-S-flock}

We close this section by showing how the flocking behavior of the C-S model (\ref{eqs:CS}) can be studied using the framework outlined above.  By our assumption, the scaling of the influence function $\phi(\cdot)\leq 1$, we have
\[
\frac{1}{N}\sum_{j\neq i}\phi(|\bx_i-\bx_j|) \leq 1.
\]
Hence, we can recast the C-S model (\ref{eq:CS}) in the form (\ref{eqs:average})
\begin{equation}\label{eq:CS_inf}
  \frac{d{\bf v}_i}{dt} = \alpha 
  \sum_{j\ne i}^N a_{ij}({\bf v}_j-{\bf v}_i), \qquad 
  a_{ij}=\left\{\begin{array}{ll} 
      \displaystyle \frac{1}{N}\phi(|\bx_i-\bx_j|), & j\neq i,\\ \\
      \displaystyle 1-\frac{1}{N}\sum_{j\neq i} \phi(|\bx_i-\bx_j|), & j=i.
    \end{array} \right.
\end{equation}
In this case, $a_{ij}\geq \phi(d_X(t))/N$ for $j\ne i$. Moreover, the same lower-bound applies for $j=i$, because of the normalization $\phi\leq 1$: 
\[
a_{ii}=1-\frac{1}{N}\sum_{j\neq i}\phi(|\bx_i-\bx_j|) \geq 1-\frac{N-1}{N} \geq \frac{\phi(d_X(t))}{N}.
\]
Therefore, if we now set  $\theta$ to be this lower-bound of the $a_{ij}$\!'s,
\[
\theta(t):=\frac{\phi(d_X(t))}{N},
\]
then $\Lambda_{p}(\theta(t))$, and consequently, $\Lambda(\theta)$, include \emph{all} agents, $\lambda(\theta)=N$, consult (\ref{eq:theta_1}). Theorem \ref{thm:ine_dx_dv_global} yields,
\begin{subequations}
  \begin{eqnarray}
    \frac{d}{dt} d_X(t) & \leq & d_V(t) \label{eq:evo_d_X_theta1}\\
    \frac{d}{dt} d_V(t) & \leq & -\alpha\,\phi^2(d_X(t)) \,d_V(t). \label{eq:evo_d_V_theta1}
  \end{eqnarray}
\end{subequations}
Now, apply lemma \ref{lem:HL} with $\psi(r)=\phi^2(r)$ to conclude the following.

\begin{corollary}
  Consider the C-S model \eqref{eqs:CS} with an influence function, $\phi$, that has a non square-integrable tail, \eqref{eq:phi2_not_integrable}.  Then the C-S solution, $\{({\bf x}_i(t),{\bf v}_i(t))\}_{i}$, converges, unconditionally, to a flock in the sense that \eqref{eq:cond_flock} holds. In particular, since the total momentum is conserved, \eqref{eq:mom},
  \[
  {\bf v}_i(t) \stackrel{t \rightarrow \infty}{\longrightarrow} \overline{{\bf v}}\; ,\quad 
  \overline{{\bf v}}:= \frac{1}{N} \sum_i {\bf v}_i(0).
  \]
\end{corollary}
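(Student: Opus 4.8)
The plan is to embed the symmetric C-S system \eqref{eqs:CS} inside the general non-symmetric framework of \eqref{eqs:average} and then invoke the machinery already assembled for that framework, adding at the end the one distinctively \emph{symmetric} ingredient — momentum conservation — needed to identify the limiting velocity. First I would rewrite the C-S dynamics in the convex-combination form \eqref{eq:CS_inf}, reading off the normalized coefficients $a_{ij} = \phi(|\bx_i-\bx_j|)/N$ for $j\ne i$ together with the diagonal weight $a_{ii} = 1 - \frac{1}{N}\sum_{j\ne i}\phi(|\bx_i-\bx_j|)$. The crucial preliminary estimate is a uniform lower bound on \emph{all} entries, $a_{ij}\ge \phi(d_X(t))/N$. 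For the off-diagonal terms this follows at once from the monotonicity of $\phi$ via $\phi(d_X)\le \phi_{ij}$; for the diagonal term it is precisely here that the normalization $\phi(\cdot)\le 1$ is used, since it forces $a_{ii}\ge 1-(N-1)/N = 1/N \ge \phi(d_X)/N$.

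With this bound in hand I would set the threshold $\theta(t):=\phi(d_X(t))/N$, which is the minimal value of the influence matrix, so that by \eqref{eq:theta_1} the global active set $\Lambda(\theta(t))$ contains every agent and $\lambda(\theta(t))=N$. Feeding this into Theorem \ref{thm:ine_dx_dv_global} collapses the product $\lambda^2(\theta)\theta^2$ into $N^2\cdot(\phi(d_X)/N)^2 = \phi^2(d_X(t))$, producing the closed differential inequalities $\frac{d}{dt}d_X\le d_V$ and $\frac{d}{dt}d_V\le -\alpha\,\phi^2(d_X)\,d_V$. I would then apply Lemma \ref{lem:HL} with $\psi(r)=\phi^2(r)$: the non-square-integrability hypothesis \eqref{eq:phi2_not_integrable} is exactly the diverging-tail condition \eqref{eq:psi_diverge}, so the lemma delivers unconditional flocking, namely $\sup_{t\ge0}d_X(t)<\infty$ and $d_V(t)\to0$, establishing \eqref{eq:cond_flock}. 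It is worth flagging that this unified route costs a square: it requires $\int^\infty\phi^2=\infty$ rather than the sharp symmetric condition $\int^\infty\phi=\infty$, the loss stemming from the factor $\lambda^2\theta^2=\phi^2$ produced by the active-set estimate.

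The remaining task is to pin the common limit to the initial data. Since $d_V(t)\to0$ while the velocities stay trapped in the nested convex hulls of remark \ref{rem:v_is_bounded}, every ${\bf v}_i(t)$ converges to a single vector ${\bf v}_\infty$; to identify it I would invoke conservation of momentum \eqref{eq:mom}, which holds here precisely because the C-S matrix $\phi_{ij}$ is symmetric, so that $\overline{\bf v}(t)=\frac{1}{N}\sum_i{\bf v}_i(t)\equiv\overline{\bf v}(0)$, and passing to the limit in this average gives ${\bf v}_\infty=\overline{\bf v}(0)$. Given the established theorem and lemma, the flocking half is essentially a direct substitution, so I expect no serious obstacle there; the two points demanding care are the diagonal lower bound $a_{ii}\ge\phi(d_X)/N$ (where $\phi\le1$ is indispensable) and the legitimacy of the time-dependent choice $\theta=\theta(t)$, which is justified by the fact that the estimate of Theorem \ref{thm:ine_dx_dv_global} holds instantaneously for each fixed value of $\theta$. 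The genuinely new structural ingredient is the final limit identification, which reintroduces the symmetry that the general asymmetric theory deliberately discards.
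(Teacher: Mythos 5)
Your proposal is correct and follows essentially the same route as the paper: recasting C-S in the convex-combination form \eqref{eq:CS_inf}, lower-bounding \emph{all} entries by $\phi(d_X(t))/N$ (with the diagonal entry handled exactly as you do, via the normalization $\phi\leq 1$), choosing $\theta(t)=\phi(d_X(t))/N$ so that $\lambda(\theta)=N$, invoking theorem \ref{thm:ine_dx_dv_global} and then lemma \ref{lem:HL} with $\psi(r)=\phi^2(r)$, and finally using momentum conservation \eqref{eq:mom} to identify the flocking velocity as $\overline{\bf v}(0)$. Your extra step through the nested convex hulls of remark \ref{rem:v_is_bounded} to first establish existence of a limit is a harmless elaboration the paper leaves implicit (momentum conservation plus $d_V(t)\rightarrow 0$ already gives $|{\bf v}_i(t)-\overline{\bf v}(0)|\leq d_V(t)\rightarrow 0$ directly).
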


Comparing the quadratic divergence (\ref{eq:non_square_int}) vs. the sharp condition for C-S flocking, (\ref{eq:phi2_not_integrable}), we observe that the unconditional C-S flocking we derive in this case requires a more stringent condition of the influence function. This is due to the fact that the proposed approach for analyzing flocking is more versatile, being independent whether the underlying model is symmetric or not.

\section{From particle to mesoscopic description}\label{sec:meso}
\setcounter{equation}{0}

We would like to study the model (\ref{eq:eitan}) when the number of particles $N$ becomes large. With this aim, it is more convenient to study the kinetic equation associated with the dynamical system (\ref{eq:eitan}). The purpose of the section is precisely to derive formally such equation.

We introduce the so-called empirical distribution \cite{spohn_large_1991} of particles
$f^N(t,{\bf x},{\bf v})$, 
\begin{equation}
  \label{eq:f_N}
  f^N(t,{\bf x},{\bf v}) := \frac{1}{N} \sum_{i=1}^N \delta_{{\bf x}_i(t)} \otimes \delta_{{\bf v}_i(t)},
\end{equation}
where $\delta_{\bf x}\otimes\delta_{\bf y}$ is the usual Dirac mass on the phase space $\mathbb{R}^d \times \mathbb{R}^d$. Integrating the empirical distribution $f^N$ in the velocity variable ${\bf v}$ gives the density distribution of particles $\rho^N(t,{\bf x})$ in space,
\begin{equation}
  \label{eq:rho_N}
  \rho^N(t,{\bf x}) = \frac{1}{N} \sum_{i=1}^N \delta_{{\bf x}_i(t)}({\bf x}).
\end{equation}
Using the distributions $f^N$ and $\rho^N$, the particle system (\ref{eq:eitan}) reads,
\begin{subequations}
  \label{eqs:disc_liouville}
  \begin{eqnarray}
    \frac{d{\bf x}_i}{dt} &=& {\bf v}_i, \\
    \frac{d{\bf v}_i}{dt} &=& \alpha \frac{\int_{{\bf y},{\bf w}} \phi(|{\bf y}\!-\!{\bf x}_i|)\,({\bf w}\!-\!{\bf
        v}_i) \, f^N({\bf y},{\bf w})\,d{\bf y} d{\bf w}}{\int_{\bf y} \phi(|{\bf y}\!-\!{\bf x}_i|)\,\rho^N({\bf
        y})\,d{\bf y}}.
  \end{eqnarray}
\end{subequations}
Therefore, we can easily check that the empirical distribution $f^N$ satisfies (weakly) the Liouville equation,
\begin{subequations}\label{eqs:liouville}
  \begin{equation}
    \label{eq:evo_f}
    \partial_t f + {\bf v}\cdot \nabla_{\bf x} f + \nabla_{{\bf v}}\cdot(F[f]\,f) = 0,
  \end{equation}
  where the vector field $F[f]$ and the total mass $\rho$ are given by,
  \begin{equation}
    \label{eq:F}
    F[f]({\bf x},{\bf v}) := \alpha \frac{\int_{{\bf y},{\bf w}} \phi(|{\bf y}\!-\!{\bf x}|)\,({\bf w}\!-\!{\bf v})f({\bf
        y},{\bf w})\,d{\bf y} d{\bf w}}{\int_{\bf y} \phi(|{\bf y}\!-\!{\bf x}|) \,\rho({\bf y})\,d{\bf
        y}} \;, \quad   \rho({\bf y}) = \int_{\bf w} f({\bf y},{\bf w})\,d{\bf w}.
  \end{equation}
\end{subequations}
To study the limit as the number of particles $N$ approaches infinity, we first assume that the initial condition $f_0^N({\bf x},{\bf v})$ converges to a smooth function $f_0({\bf x},{\bf v})$ as $N\rightarrow +\infty$. Then it is natural to expect that $f^N(t,{\bf x},{\bf v})$ convergences to the solution $f(t,{\bf x},{\bf v})$ of the kinetic equation,
\begin{equation}
  \label{eq:evo_f2}
  \left\{
    \begin{array}{l}
      \partial_t f + {\bf v}\cdot \nabla_{\bf x} f + \nabla_{\bf v}\cdot(F[f]\,f) = 0, \\
      f_{t=0} = f_0
    \end{array}
  \right.
\end{equation}
However, the passage from the discrete system (\ref{eqs:disc_liouville}) to the kinetic formulation (\ref{eqs:liouville}) is more delicate than in the argument for the C-S model \cite{ha_simple_2009,ha_particle_2008}: here, the vector field $F[f]$ may not posses enough Lipschitz regularity due to the normalizing factor at the denominator of (\ref{eq:F}).  But since this question does not play a central in the scope of this paper, we leave the study of existence and uniqueness of solution of the kinetic equation (\ref{eqs:liouville}) for a future work, and we turn our focus to the \emph{hydrodynamic} model.

\section{Hydrodynamics of the proposed model and its flocking behavior}\label{sec:hydro}
\setcounter{equation}{0}

Having the kinetic description associated with the particle dynamics (\ref{eq:eitan}), we can derive the macroscopic limit of the dynamics \cite{degond_large_2008,degond_continuum_2008,ha_particle_2008}. We also extend our method developed in section \ref{sec:tools} to prove the flocking behavior of the model in the macroscopic case. To this end we extend the notion of active sets from the discrete setup the continuum, and the corresponding key algebraic lemma \ref{lem:fundamental_lemma} for skew-symmetric integral operators.

\subsection{Macroscopic system}\label{sec:macro}
To derive the macroscopic model of the particle system (\ref{eq:eitan}), we just integrate the kinetic equation (\ref{eq:evo_f}) in the phase space. With this aim, we first define the macroscopic velocity ${\bf u}$ and the pressure term ${\bf P}$,
\begin{displaymath}
  \rho(t,{\bf x}){\bf u}(t,{\bf x}) = \int_{\bf v} {\bf v} f(t,{\bf x},{\bf v})\,d{\bf v} \quad,
  \quad   {\bf P}(t,{\bf x})=\int_{\bf v} ({\bf v}-{\bf u})\otimes({\bf v}-{\bf u})f(t,{\bf x},{\bf v})\,d{\bf v},
\end{displaymath}
where $\rho$ is the spatial density defined previously (\ref{eq:F}). Then integrating the kinetic equation (\ref{eq:evo_f}) against the first moments $(1,{\bf v})$ yields the system (see also \cite{ha_particle_2008}),
\begin{subequations}\label{eqs:macro}
  \begin{eqnarray}
    \label{eq:macro_rho_tp}
    && \partial_t \rho + \nabla_{\bf x} \cdot (\rho{\bf u}) =0 \\
    \label{eq:macro_u_tp}
    && \partial_t (\rho {\bf u}) + \nabla_{\bf x}\cdot(\rho{\bf u} \otimes {\bf u} + {\bf P}) = {\bf S}({\bf u}),
  \end{eqnarray}
  where the source term ${\bf S}({\bf u})$ is given by, (recall the notation of (\ref{eq:hydro_con2}),  $\langle {\bf w}\rangle =\phi*({\bf w}\rho$)),
  \begin{equation}
    \label{eq:source_term}
    {\bf S}({\bf u})(\bx) = \alpha\,\frac{\ds \int_{\bf y} \phi(|{\bf y}\!-\!{\bf x}|)\rho({\bf x})\rho({\bf y})\big({\bf u}({\bf y})-{\bf u}({\bf
        x})\big)\,d{\bf y}}{\ds \int_{\bf y} \phi(|{\bf y}\!-\!{\bf x}|)\rho({\bf y})\,d{\bf y}} = \alpha \rho(\bx)\left(\frac{\langle {\bf u}\rangle(\bx)}{\langle 1\rangle(\bx)} - {\bf u}(\bx)\right).
  \end{equation}
\end{subequations}
The system (\ref{eqs:macro}) is not closed since the equation for $\rho{\bf u}$ (\ref{eq:macro_u_tp}) does depend on the third moment of $f$ which is encoded in the pressure term ${\bf P}$. In order to close the system, we neglect the pressure, setting ${\bf P}=0$ (in other words, we assume a monophase distribution, $ f(t,{\bf x},{\bf v}) = \rho(t,{\bf x})\, \delta_{{\bf u}(t,{\bf x})}({\bf v})$).  Under this assumption, (\ref{eqs:macro}) is reduced to the closed system (\ref{eqs:hydro_cons}),
\begin{subequations}\label{eqs:macro_closed}
  \begin{eqnarray}
    \label{eq:macro_rho}
    && \partial_t \rho + \nabla_{\bf x} \cdot (\rho{\bf u}) =0 \\
    \label{eq:macro_u}
    && \partial_t (\rho {\bf u}) + \nabla_{\bf x}\cdot(\rho{\bf u} \otimes {\bf u}) = {\bf S}({\bf u}).
  \end{eqnarray}
\end{subequations}

We want to study the flocking behavior of \emph{general} systems of the form 
(consult figure \ref{fig:active_set_macro}),
\begin{subequations}\label{eqs:macro_average}
  \begin{eqnarray}
    \label{eq:macro_rho_noncons}
    && \partial_t \rho + \nabla_{\bf x} \cdot (\rho{\bf u}) =0 \\
    \label{eq:macro_u_noncons}
    && \partial_t {\bf u} + ({\bf u}\cdot\nabla_{\bf x}){\bf u} = \alpha(\overline{{\bf u}}-{\bf u}).
  \end{eqnarray}
  The expression on the right reflects the tendency of agents with velocity ${\bf u}$ to relax to the \emph{local average velocity}, $\overline{{\bf u}}({\bf x})$, dictated by the influence function  $a({\bf x},{\bf y})$,
  \begin{equation}
    \label{eq:over_u}
    \overline{{\bf u}}({\bf x}) = \int_{\bf y} a({\bf x},{\bf y}) \rho({\bf y}) {\bf u}({\bf y})\,d{\bf y}, \quad \int_{{\bf y}} a({\bf x},{\bf y})\rho({\bf y})\,d{\bf y} = 1.
  \end{equation}
\end{subequations}
The class of equations (\ref{eqs:macro_average}) includes, in particular, the hydrodynamic description of our self-organized dynamics model, (\ref{eqs:macro_closed}), with
\begin{equation}
  \label{eq:a_macro}
  a({\bf x},{\bf y}) = \frac{\phi(|{\bf y}-{\bf x}|)}{\int_{{\bf y}}\phi(|{\bf y}-{\bf x}|)\rho({\bf y})\,d{\bf y}}.
\end{equation}
 
\begin{figure}[ht]
  \centering
  \includegraphics[scale=1]{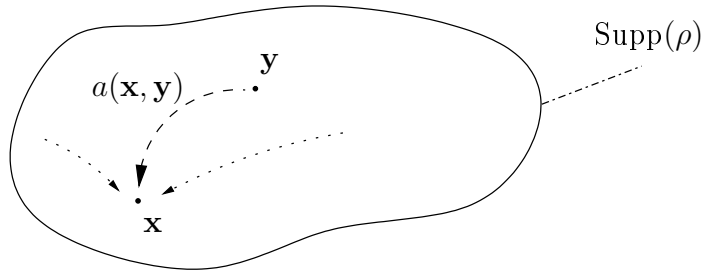}
  \caption{The quantity $a({\bf x},{\bf y})$ (\ref{eq:a_macro}) is the relative influence
    of the particles in ${\bf y}$ on the particles in ${\bf x}$.}
  \label{fig:active_set_macro}
\end{figure}

We begin with the definition of a flock in the macroscopic case.

\begin{definition}\label{defi:flock_macro}
  Let $\rho(t,{\bf x})>0$ and ${\bf u}(t,{\bf x}))$ be the density and velocity vector field which solve \eqref{eqs:macro_average}. Let $\text{Supp}(\rho)$ denotes the non-vacuum states,
  \begin{displaymath}
    \text{Supp}(\rho) := \overline{\{{\bf x}\in \mathbb{R}^d\ \big| / \ \rho({\bf x}) \neq 0\}},
  \end{displaymath}
  and consider the diameters, $d_X(t)$ and $d_V(t)$, of $\rho$ and, respectively, ${\bf u}$, 
  \begin{subequations}\label{eqs:d_XV_macro}
    \begin{eqnarray}
      \label{eq:d_X_macro}
      d_X(t) &:=& \sup \{|{\bf x}-{\bf y}| \,,\;\; {\bf x},{\bf y} \in \text{Supp}(\rho(t))\},\\
      \label{eq:d_V_macro}
      d_V(t) &:=& \sup \{|{\bf u}(t,{\bf x})-{\bf u}(t,{\bf y})| \, , \;\; {\bf x},{\bf y} \in \text{Supp}(\rho(t))\}.
    \end{eqnarray}
  \end{subequations}
  The solution $(\rho,{\bf u})$ converges to a flock if its diameters satisfy,
  \begin{equation}
    \label{eq:flock_macro}
    \sup_{t\geq0}\, d_X(t) < +\infty \qquad \text{and} \qquad \lim_{t\rightarrow+\infty} d_V(t) =0.
  \end{equation}
\end{definition}
Clearly, in order to have a flock, the initial density, $\rho_0$, needs to be compactly supported. Furthermore, we also impose that the initial velocity, ${\bf u}_0$, has a compact support, assuming:
\begin{equation}
  \label{eq:supp_bound}
  d_X(0) <+\infty \quad \text{ and } \quad d_V(0) <+\infty.
\end{equation}
In the following, we assume there exists a smooth solution $(\rho,{\bf u})$ of the system (\ref{eqs:macro_average}).

\medskip

\noindent \underline{{\bf Hypothesis}}: {\it Consider the system \eqref{eqs:macro_average} subject to compactly supported initial data, $(\rho_0,\,u_0)$,  \eqref{eq:supp_bound}. We
  assume that  it admits a unique smooth solution $(\rho(t),{\bf u}(t))$ for all $t\geq 0$.}

\subsection{Active sets at the macroscopic scale}

To prove that the solution $(\rho,{\bf u})$ converges to a flock, we need to show that the convex hull in velocity space,
\[
\Omega(t):= Conv\{{\bf u}(t,\bx) \ | \ \bx \in Supp(\rho(t,\cdot))\},
\]
shrinks to a single point, as its diameter, $d_V(t)$, converges to zero. To this end, we employ the notion of \emph{active sets} which is extended to the present context of  macroscopic framework. We begin by revisiting our definition  of active set using the influence function $a({\bf x},{\bf  y})$ (\ref{eq:a_macro}).

\begin{definition}
  Fix $\theta>0$. For every ${\bf x}$ in the support of $\rho$, we define the active set, $\Lambda_{\bf x}(\theta)$, as
  \begin{equation}
    \label{eq:active_set_macro}
    \Lambda_{\bf x}(\theta) = \{ {\bf y} \in \text{Supp}(\rho) \ \big| \ a({\bf x},{\bf y}) \geq \theta \}.
  \end{equation}
  The global active set $\Lambda(\theta)$ is the intersection of all the active set $\Lambda_{\bf x}(\theta)$:
  \begin{equation}
    \label{eq:active_set_global_macro}
    \Lambda(\theta) = \!\!\!\bigcap_{{\bf x} \in \text{Supp}(\rho)} \!\!\! \Lambda_{\bf x}(\theta) \;=\; \left\{ {\bf y} \in \text{Supp}(\rho) \ \big| \
      a({\bf x},{\bf y}) \geq \theta  \; \text{ for all } {\bf x} \text{ in }
      \text{Supp}(\rho)\right\}.
  \end{equation}
\end{definition}
As before, we let $\lambda_{\mathcal I}(\theta)$ denote the density of agents in the corresponding active set; thus
\begin{equation}
  \label{eq:theta_macro}
  \lambda_{\bf x}(\theta) := \int_{\Lambda_{\bf x}(\theta)} \rho({\bf y})\,d{\bf y}, \qquad  \lambda(\theta) = \int_{\Lambda(\theta)} \rho({\bf y})\,d{\bf y}.
\end{equation}

We would like to extend the key lemma \ref{lem:fundamental_lemma} from the discrete case of agents to the  macroscopic case of the continuum. This is formulated in terms of  the maximal action of integral operators which involve  antisymmetric kernels, $k({\bf x},{\bf y})$. 
\begin{lemma}
  \label{lem:fundamental_lemma_macro}
  Let $\rho \in L^1(\mathbb{R}^d)$ be a positive function and let $k$ be a bounded antisymmetric kernel, $|k({\bf x},{\bf y})| \leq M$ and \mbox{$k({\bf x},{\bf y})=-k({\bf y},{\bf x})$}. Fix two positive functions, $u$ and $w$ in $L_\rho^1$ with a total mass $\overline{U}$ and $\overline{W}$,
  \begin{equation}
    \overline{U} = \int_{\bf x} u({\bf x}) \rho({\bf x})\,d{\bf x}  \quad \text{and} \quad
    \overline{W} = \int_{\bf x} w({\bf x}) \rho({\bf x})\,d{\bf x}.
  \end{equation}
  Then, for every positive number $\theta$, we have:
  \begin{equation}
    \label{eq:S_ine_macro}
    \left| \int_{{\bf x},{\bf y}} k({\bf x},{\bf y}) u({\bf x}) w({\bf y})\,\rho({\bf x})\rho({\bf y})\,d{\bf x} d{\bf y} \right| \leq
    M \overline{U}\,\overline{W}\,\big(1-\lambda^2(\theta) \theta^2\big).
  \end{equation}
  Here, $\lambda(\theta)$ is the density of active agents at level $\theta$ for $u$ and $w$,
  \begin{displaymath}
    \lambda(\theta) = \int_{\Lambda_{u,w}(\theta)}  \rho({\bf x})\,d{\bf x},\qquad
    \Lambda_{u,w}(\theta):= \{ {\bf x} \in \text{Supp}(\rho)\ \big| \ u({\bf x}) \geq \theta\,\overline{U} \; \text{ and }
    \; w({\bf x}) \geq \theta\,\overline{W} \}.
  \end{displaymath}
\end{lemma}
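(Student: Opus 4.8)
The plan is to mimic the proof of the discrete Lemma \ref{lem:fundamental_lemma} almost verbatim, replacing sums by $\rho$-weighted integrals. First I would exploit the antisymmetry of $k$ to symmetrize the integrand. Writing
\[
I := \int_{{\bf x},{\bf y}} k({\bf x},{\bf y}) u({\bf x}) w({\bf y})\,\rho({\bf x})\rho({\bf y})\,d{\bf x}\,d{\bf y},
\]
I swap the roles of ${\bf x}$ and ${\bf y}$ in a second copy of the integral and use $k({\bf y},{\bf x})=-k({\bf x},{\bf y})$ to obtain
\[
I = \frac{1}{2}\int_{{\bf x},{\bf y}} k({\bf x},{\bf y})\big(u({\bf x})w({\bf y}) - u({\bf y})w({\bf x})\big)\rho({\bf x})\rho({\bf y})\,d{\bf x}\,d{\bf y}.
\]
Then the bound $|k|\leq M$ gives $|I| \leq \frac{M}{2}\int_{{\bf x},{\bf y}} |u({\bf x})w({\bf y}) - u({\bf y})w({\bf x})|\rho({\bf x})\rho({\bf y})\,d{\bf x}\,d{\bf y}$, exactly as in the discrete case.

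Next I apply the elementary identity $|a-b| = a + b - 2\min(a,b)$ for $a,b\geq 0$ to the integrand with $a=u({\bf x})w({\bf y})$ and $b=u({\bf y})w({\bf x})$. The positive parts integrate cleanly: $\int u({\bf x})w({\bf y})\rho({\bf x})\rho({\bf y}) = \overline{U}\,\overline{W}$ and likewise for the symmetric term, so those two contributions give $2\overline{U}\,\overline{W}$. This yields
\[
|I| \leq M\Big(\overline{U}\,\overline{W} - \int_{{\bf x},{\bf y}} \min\{u({\bf x})w({\bf y}), u({\bf y})w({\bf x})\}\,\rho({\bf x})\rho({\bf y})\,d{\bf x}\,d{\bf y}\Big).
\]
To finish, I restrict the $\min$-integral to the active set $\Lambda_{u,w}(\theta)\times\Lambda_{u,w}(\theta)$, where by definition both $u\geq\theta\overline{U}$ and $w\geq\theta\overline{W}$ hold, so the integrand is bounded below by $\theta^2\overline{U}\,\overline{W}$; integrating this constant over the product set contributes $\theta^2\overline{U}\,\overline{W}\cdot\lambda^2(\theta)$, since $\int_{\Lambda_{u,w}(\theta)}\rho = \lambda(\theta)$ by definition \eqref{eq:theta_macro}. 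Discarding the (nonnegative) contribution from outside this product set can only increase the upper bound, giving $|I| \leq M\overline{U}\,\overline{W}(1-\lambda^2(\theta)\theta^2)$, which is \eqref{eq:S_ine_macro}.

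The only genuine subtlety beyond the discrete argument is measure-theoretic bookkeeping: I must justify that Fubini applies to the double integrals, which follows from $u,w\in L^1_\rho$, $\rho\in L^1$, and $|k|\leq M$ making every integrand absolutely integrable. I should also confirm that $\Lambda_{u,w}(\theta)$ is measurable — automatic once $u,w$ are measurable — so that $\lambda(\theta)$ and the restricted integral make sense. I do not expect a real obstacle here; the entire proof is a routine transcription of Lemma \ref{lem:fundamental_lemma}, with $\sum_i$ replaced by $\int \rho\,d{\bf x}$ and cardinality $\lambda$ replaced by the $\rho$-mass $\lambda(\theta)$.
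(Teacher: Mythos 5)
Your proposal is correct and follows essentially the same route as the paper's own proof: antisymmetrization of the kernel, the identity $|a-b| = a+b-2\min(a,b)$, and restriction of the $\min$-integral to $\Lambda_{u,w}(\theta)\times\Lambda_{u,w}(\theta)$. The added remarks on Fubini and measurability are routine and do not change the argument.
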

\begin{proof}
  To simplify, we denote $\mathcal{S}:=\int_{{\bf x},{\bf y}} k({\bf x},{\bf y})\,u({\bf x})w({\bf y})\,\rho({\bf x})\rho({\bf y})\,d{\bf x}d{\bf y}$. The anti-symmetry of $k$ enables us to rewrite,
  \begin{equation*}
    \mathcal{S} = \frac{1}{2} \int_{{\bf x},{\bf y}}
    k({\bf x},{\bf y})\Big[u({\bf x})w({\bf y}) -u({\bf y})w({\bf x})\Big]\rho({\bf
      x})\rho({\bf y})\,d{\bf x}d{\bf y}.
  \end{equation*}
  The bound on $k$ and the identity $|a-b| \equiv a+b -2 \min(a,b)$ yields,
  \begin{displaymath}
    \left|\mathcal{S}\right| \leq   \frac{1}{2} \int_{{\bf x},{\bf y}} \!\!\!\! M \Big[u({\bf x})w({\bf y}) + u({\bf y})w({\bf x}) -\;2\min\big(u({\bf x})w({\bf
      y}),u({\bf y})w({\bf x})\big)\Big]\rho({\bf x})\rho({\bf y})\,d{\bf x}d{\bf y}.
  \end{displaymath}
  Using the notations, we obtain,
  \begin{displaymath}
    \left|\mathcal{S}\right| \leq  M \overline{U}\,\overline{W} - M \int_{{\bf x},{\bf y}} \min\big(u({\bf
      x})w({\bf y}),u({\bf y})w({\bf x})\big)\rho({\bf x})\rho({\bf y})\,d{\bf x}d{\bf y}.
  \end{displaymath}
  We now restrict the domain of integration on the right hand side to $(\bx,\by)\in \Lambda_\theta\times\Lambda_\theta$, where the lower-bounds of $u$ and $w$ yield,
  \begin{eqnarray*}
    \left|\mathcal{S}\right| & \leq & M \overline{U}\,\overline{W} - M \int_{\Lambda_\theta\times\Lambda_\theta} \theta \overline{U}\,\theta \overline{W} \,\rho({\bf x})\rho({\bf y})\,d{\bf x}d{\bf y}
    = M \overline{U}\,\overline{W} - M \theta^2 \overline{U}\,\overline{W}\lambda^2(\theta),
  \end{eqnarray*}
  and (\ref{eq:S_ine_macro}) follows.
\end{proof}

\subsection{Decay of the diameters}\label{sec:decay_macro_diam}

The diameters $(d_X,d_V)$ also satisfy the same inequality at the macroscopic level. We only need to adapt the proof using the characteristics of the system (\ref{eqs:macro_average}).
\begin{proposition}
  \label{ppo:ine_dx_dv_macro}
  Let $(\rho,{\bf u})$ the solution of the dynamical system \eqref{eqs:macro_average}. Fix an arbitrary $\theta$ and let $\lambda(\theta)$ be the density of agents on the corresponding global active set $\Lambda(\theta)$ associated with this system, \eqref{eq:theta_macro}. Then, the diameters $d_X(t)$ and $d_V(t)$ in \eqref{eqs:d_XV_macro} satisfy,
  \begin{subequations}\label{eqs:evo_d_XV_macro}  
    \begin{eqnarray}
      \label{eq:evo_d_X_macro}
      \frac{d}{dt} d_X(t) & \leq & d_V(t) \\
      \label{eq:evo_d_V_macro}
      \frac{d}{dt} d_V(t) & \leq & -\alpha\,\lambda^2(\theta)\,\theta^2(t) \,d_V(t).
    \end{eqnarray}
  \end{subequations}
\end{proposition}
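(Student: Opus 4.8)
The plan is to reduce the macroscopic estimate to the discrete computation behind Theorem~\ref{thm:ine_dx_dv} by following characteristics. Since the density is transported by the flow $\dot{\bx}(t)=\bu(t,\bx(t))$, the non-vacuum set $\text{Supp}(\rho(t))$ is the image of $\text{Supp}(\rho_0)$ under the flow map, and along any characteristic the velocity obeys the material derivative
\[
\frac{d}{dt}\,\bu(t,\bx(t)) = \partial_t\bu + (\bu\cdot\nabla_\bx)\bu = \alpha\big(\overline{\bu}(\bx(t))-\bu(t,\bx(t))\big),
\]
by \eqref{eq:macro_u_noncons}. This is the exact continuum analogue of $\dot{\bf v}_i=\alpha(\overline{\bf v}_i-{\bf v}_i)$, so the two estimates in \eqref{eqs:evo_d_XV_macro} should mirror those in the particle case.

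First I would treat $d_X$. Picking two characteristics $\bx_p(t),\bx_q(t)$ through boundary points of the support that realize $d_X(t)$, one has $\tfrac{d}{dt}|\bx_p-\bx_q|^2 = 2\langle \bx_p-\bx_q,\bu(\bx_p)-\bu(\bx_q)\rangle \le 2|\bx_p-\bx_q|\,d_V(t)$, which gives \eqref{eq:evo_d_X_macro}. For $d_V$, set $\bu_p:=\bu(t,\bx_p(t))$ and $\overline{\bu}_p:=\overline{\bu}(\bx_p(t))$ and compute, using the material derivative above,
\[
\frac{d}{dt}|\bu_p-\bu_q|^2 = 2\alpha\langle \bu_p-\bu_q,\overline{\bu}_p-\overline{\bu}_q\rangle - 2\alpha|\bu_p-\bu_q|^2.
\]
The crux is to rewrite the averaged difference as a double integral: using the normalization $\int_\by a(\bx_q,\by)\rho(\by)\,d\by=1$ from \eqref{eq:over_u},
\[
\overline{\bu}_p-\overline{\bu}_q = \int_\by\!\int_{\by'} a(\bx_p,\by)\,a(\bx_q,\by')\,\rho(\by)\rho(\by')\,\big(\bu(\by)-\bu(\by')\big)\,d\by\,d\by',
\]
exactly paralleling $\overline{\bf v}_p-\overline{\bf v}_q=\sum_{i,j}a_{pj}a_{qi}({\bf v}_j-{\bf v}_i)$.

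I then apply Lemma~\ref{lem:fundamental_lemma_macro} with $u(\by)=a(\bx_p,\by)$, $w(\by)=a(\bx_q,\by)$ (so $\overline{U}=\overline{W}=1$) and the antisymmetric kernel $k(\by,\by')=\langle \bu_p-\bu_q,\bu(\by)-\bu(\by')\rangle$, whose bound is $M=d_V^2$. Its active set is precisely $\Lambda_{\bx_p}(\theta)\cap\Lambda_{\bx_q}(\theta)$, of mass $\lambda_{pq}(\theta)\ge\lambda(\theta)$ since the global $\Lambda(\theta)$ is contained in every pairwise active set. Hence $|\langle\bu_p-\bu_q,\overline{\bu}_p-\overline{\bu}_q\rangle|\le d_V^2(1-\lambda^2(\theta)\theta^2)$, and choosing $p,q$ so that $|\bu_p-\bu_q|=d_V$ gives $\tfrac{d}{dt}d_V^2\le -2\alpha\lambda^2(\theta)\theta^2 d_V^2$, which is \eqref{eq:evo_d_V_macro}.

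The step needing the most care is differentiating the suprema $d_X(t)$ and $d_V(t)$, since the maximizing pairs of points are not fixed and may jump in time. I would handle this by parametrizing the support through the flow map, writing $d_V(t)=\sup_{\mathbf a,\mathbf b\in\text{Supp}(\rho_0)}|U(t,\mathbf a)-U(t,\mathbf b)|$ with $U(t,\mathbf a):=\bu(t,\bx(t;\mathbf a))$ a time-differentiable lift; the smoothness granted by the standing Hypothesis, together with compact support, makes each sup attained, and a Danskin/envelope argument bounds the upper Dini derivative of the sup by the maximal pointwise time-derivative over maximizers --- precisely the quantity estimated above. This is the only genuinely continuum issue; everything else transfers almost verbatim from the particle proof once Lemma~\ref{lem:fundamental_lemma_macro} replaces Lemma~\ref{lem:fundamental_lemma}.
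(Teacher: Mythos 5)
Your proof is correct and follows essentially the same route as the paper's: both pass to characteristics, rewrite $\overline{\bu}(Y)-\overline{\bu}(X)$ as a double integral against $a(Y,\cdot)\,a(X,\cdot)\,\rho\rho$, invoke Lemma~\ref{lem:fundamental_lemma_macro} with $\overline{U}=\overline{W}=1$, and conclude by choosing characteristics realizing $d_V(t)$. The only (harmless) deviations are that you apply the lemma to the scalar kernel $\langle \bu_p-\bu_q,\bu(\by)-\bu(\by')\rangle$ with $M=d_V^2$ where the paper uses the kernel $\bu({\bf w})-\bu({\bf z})$ with $M=d_V$, and you are more explicit than the paper both about passing from the pairwise active set $\Lambda_{\bx_p}(\theta)\cap\Lambda_{\bx_q}(\theta)$ to the global one and about justifying differentiation of the suprema.
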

\begin{proof}
  We fix our attention on two characteristics $\dot{X}(t)={\bf u}(t,X)$ and $\dot{Y}(t)={\bf u}(t,Y)$, subject to initial conditions, $X(0)=\bx$ and $Y(0)=\by$ for two points $({\bf x},{\bf y})$ in the support of $\rho(0)$. Their relative distance satisfy:
  \begin{displaymath}
    \frac{d}{dt} |Y-X|^2 = 2 \langle \,Y \!-\! X\,,\,{\bf u}(Y) - {\bf u}(X)\,\rangle  \leq  2 d_X \,d_V.
  \end{displaymath}
  Since this inequality is true for every characteristics, (\ref{eq:evo_d_X_macro}) follows.

  We turn to study the relative distance in velocity phase space: using (\ref{eq:over_u}) we find,
  \begin{displaymath}
    \frac{d}{dt}\,|{\bf u}(Y)-{\bf u}(X)|^2 = 2\alpha\,\langle {\bf u}(Y)-{\bf u}(X)\,,\, \overline{{\bf u}}(Y) -
    \overline{{\bf u}}(X) - \big({\bf u}(Y)-{\bf u}(X)\big)\rangle,
  \end{displaymath}
  and hence,
  \begin{equation}
    \label{eq:evo_u_p_u_q}
    \frac{d}{dt}\,|{\bf u}(Y)-{\bf u}(X)|^2 \leq 2\alpha\,d_V \Big(|\overline{{\bf u}}(Y)
    - \overline{{\bf u}}(X)| - |{\bf u}(Y)-{\bf u}(X)|\Big).
  \end{equation}
  Using the fact that $a(X,\cdot)$ has a unit $\rho$-mass,  the difference of averages  $\overline{{\bf u}}(Y) - \overline{{\bf u}}(X)$ can be expressed as 
  \begin{eqnarray*}
    \overline{{\bf u}}(Y) - \overline{{\bf u}}(X) &=& \int_{{\bf w}} a(Y,{\bf w}) \rho({\bf w}){\bf u}({\bf w})\,d{\bf w} - \int_{{\bf z}}
    a(X,{\bf z}) \rho({\bf z}){\bf u}({\bf z})\,d{\bf z} \nb \\
    &=& \int_{{\bf w},{\bf z}} a(Y,{\bf w}) \rho({\bf w}) {\bf u}({\bf w})\, a(X,{\bf z}) \rho({\bf z})\,d{\bf w}
    d{\bf z} \\
    &&  - \;\int_{{\bf w},{\bf z}} a(X,{\bf z}) \rho({\bf z}){\bf u}({\bf z}) \,a(Y,{\bf w}) \rho({\bf w}) \,d{\bf w} d{\bf z} \\
    &=& \int_{{\bf w},{\bf z}} a(Y,{\bf w}) a(X,{\bf z}) \big[{\bf u}({\bf w})-{\bf
      u}({\bf z})\big]\rho({\bf w})\rho({\bf z})\,d{\bf w}d{\bf z}.
  \end{eqnarray*}
  We now appeal to the maximal action lemma, \ref{lem:fundamental_lemma_macro}, with anti-symmetric kernel, $k({\bf w},{\bf z}) = {\bf u}({\bf w})-{\bf u}({\bf z})$, and the positive functions $u({\bf w}) = a(Y,{\bf w})$ and $w({\bf z}) = a(X,{\bf z})$: since
  \begin{displaymath}
    |{\bf u}({\bf y})-{\bf u}({\bf z})| \leq d_V \;\;, \;\; \overline{U} = \int_{\bf y} a(Y,{\bf
      y})\rho({\bf y})\,d{\bf y} = 1 \; \text{ and } \overline{W} = \int_{\bf y} a(X,{\bf y})\rho({\bf y})\,d{\bf y} =1,
  \end{displaymath}
  we obtain,
  \begin{displaymath}
    |\overline{{\bf u}}(Y) - \overline{{\bf u}}(X)| \leq d_V\big(1-\lambda^2(\theta) \theta^2\big),
  \end{displaymath}
  Inserted into (\ref{eq:evo_u_p_u_q}), we end up with 
  \[
  \frac{d}{dt}\,|{\bf u}(Y)-{\bf u}(X)|^2 \leq 2\alpha\,d_V \Big(d_V(1-\lambda^2(\theta)\theta^2) - |{\bf u}(Y)-{\bf u}(X)|\Big).
  \]
  Finally, since the support of $\rho$ is compact, we can take the two characteristics $Y(t)$ and $X(t)$ such that at time $t$ we have: $d_V(t) = |u(Y)-u(X)|$, and the last inequality yields (\ref{eq:evo_d_V_macro}),
  \begin{displaymath}
    \frac{d}{dt}\,d_V^2(t) \leq 2\alpha\,d_V(t) \Big(d_V(t)\big(1-\lambda^2(\theta)\theta^2\big) -d_V(t)\Big)= -2\alpha\lambda^2(\theta)\theta^2 d_V^2(t).
  \end{displaymath}
\end{proof}

\subsection{Flocking in the hydrodynamic limit}\label{sec:hydro_flock}
Since the diameters $d_X$ and $d_V$ satisfy the same system of inequalities at the macroscopic level, (\ref{eqs:evo_d_XV_macro}), as in the particle level, (\ref{eqs:evo_d_XV}), we immediately deduce that theorem \ref{thm:flock_particle} is still valid for the macroscopic system (\ref{eqs:macro_average}).
\begin{theorem}
  Let $(\rho,{\bf u})$ the solution of the system \eqref{eqs:macro_average}. If the influence kernel, $\phi$, decays sufficiently slow,
  \begin{equation}
    \label{eq:nonint_macro}
    \int_0^\infty \phi^2(r)\,dr = +\infty,
  \end{equation}
  then $(\rho,{\bf u})$ converges to a flock in the sense of definition \ref{defi:flock_macro}.
\end{theorem}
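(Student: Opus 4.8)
The plan is to mirror the particle-level argument of Theorem~\ref{thm:flock_particle}, replacing its two ingredients by their macroscopic counterparts: the diameter inequalities of Proposition~\ref{ppo:ine_dx_dv_macro} and the energy-method Lemma~\ref{lem:HL}. The entire proof then reduces to making the judicious choice of level $\theta=\theta(d_X(t))$ that forces $\lambda^2(\theta)\,\theta^2=\phi^2(d_X(t))$, so that \eqref{eqs:evo_d_XV_macro} becomes exactly the dynamical system \eqref{eqs:HL} with $\psi(r)=\phi^2(r)$.

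To identify this $\theta$, I would first bound the influence kernel $a(\bx,\by)$ of \eqref{eq:a_macro} from below on the support of $\rho$. Since $\phi$ is decreasing and $|\bx-\by|\le d_X(t)$ whenever $\bx,\by\in\text{Supp}(\rho(t))$, the numerator obeys $\phi(|\by-\bx|)\ge\phi(d_X(t))$; since $\phi\le 1$ and the total mass $\int_\by\rho(\by)\,d\by$ is conserved by \eqref{eq:macro_rho_noncons} and normalized to $1$, the denominator satisfies $\int_\by\phi(|\by-\bx|)\rho(\by)\,d\by\le 1$. Hence
\[
a(\bx,\by)\ \ge\ \phi(d_X(t)),\qquad \bx,\by\in\text{Supp}(\rho(t)).
\]
Setting $\theta(t):=\phi(d_X(t))$, every $\by\in\text{Supp}(\rho)$ then belongs to $\Lambda_\bx(\theta)$ for every $\bx$, so the global active set is the whole support, $\Lambda(\theta)=\text{Supp}(\rho(t))$, with mass $\lambda(\theta)=\int_{\text{Supp}(\rho)}\rho(\by)\,d\by=1$.

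With this choice $\lambda^2(\theta)\,\theta^2=\phi^2(d_X(t))$, and Proposition~\ref{ppo:ine_dx_dv_macro} yields
\[
\frac{d}{dt}d_X(t)\le d_V(t),\qquad \frac{d}{dt}d_V(t)\le-\alpha\,\phi^2(d_X(t))\,d_V(t).
\]
These are precisely the inequalities \eqref{eqs:HL} with $\psi=\phi^2$, so Lemma~\ref{lem:HL} applies directly; the diverging-tail hypothesis \eqref{eq:nonint_macro}, $\int^\infty\phi^2(r)\,dr=\infty$, is exactly condition \eqref{eq:psi_diverge}, and therefore delivers unconditional flocking in the sense of Definition~\ref{defi:flock_macro}.

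Unlike the particle case, there is little genuine obstacle left once Proposition~\ref{ppo:ine_dx_dv_macro} is in hand: the heavy lifting --- the continuum maximal-action Lemma~\ref{lem:fundamental_lemma_macro} and the differentiation of $d_V^2$ along characteristics --- has already been carried out there. The only points that require care are (i) the standing well-posedness Hypothesis, which guarantees a smooth solution and hence that the suprema defining $d_X(t),d_V(t)$ over the compact, transported support are attained and differentiable along the pair of characteristics realizing them; and (ii) the mass normalization $\int\rho\equiv 1$ used to bound the denominator of $a$, which is legitimate since total mass is conserved by \eqref{eq:macro_rho_noncons} and may be rescaled. Beyond these I expect no further difficulty, the essential work having been relegated to the macroscopic lemma and proposition established above.
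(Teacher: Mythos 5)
Your proposal is correct and follows essentially the same route as the paper: lower-bound $a(\bx,\by)\ge\phi(d_X(t))/\overline{\rho}$ on the support of $\rho$, choose $\theta(t)$ equal to this bound so the global active set is the whole support and $\lambda(\theta)\,\theta=\phi(d_X(t))$, then invoke Proposition \ref{ppo:ine_dx_dv_macro} and Lemma \ref{lem:HL} with $\psi=\phi^2$. The only (cosmetic) difference is that you normalize the conserved total mass to $1$ — which is legitimate since the system \eqref{eqs:macro_average}, \eqref{eq:a_macro} is invariant under $\rho\mapsto c\rho$ — whereas the paper carries the factor $\overline{\rho}$ through, with $\theta=\phi(d_X)/\overline{\rho}$ and $\lambda(\theta)=\overline{\rho}$ cancelling in the product $\lambda^2(\theta)\,\theta^2$.
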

\begin{proof}
  For every ${\bf x}$ and ${\bf y}$ in the support of  $\rho(t,\cdot)$, we have,
  \begin{displaymath}
    a({\bf x},{\bf y}) = \frac{\phi(|{\bf y}-{\bf x}|)}{\int_{{\bf y}}\phi(|{\bf y}-{\bf x}|)\rho({\bf
        y})\,d{\bf y}} \geq \frac{\phi(d_X)}{\int_{{\bf y}}\phi(0)\rho({\bf
        y})\,d{\bf y}} = \frac{\phi(d_X)}{\overline{\rho}}, \quad \overline{\rho}:=\int_{\by} \rho(t,\by)d\by \equiv \int_{\by} \rho_0(\by)d\by.
  \end{displaymath}
  Thus, if we take $\theta(t) = \phi(d_X(t))/\overline{\rho}$, every point ${\bf y}$ in the support of $\rho(t,\cdot)$ belongs to the global active set $\Lambda(\theta)$. Therefore, for this choice of $\theta$, we have,
  \begin{displaymath}
    \lambda(\theta) = \int_{\Lambda(\theta)} \rho({\bf x})\,d{\bf x} = \int_{Supp(\rho)} \rho({\bf x})\,d{\bf x} = \overline{\rho}.
  \end{displaymath}
  We deduce that,
  \begin{eqnarray*}
    \frac{d}{dt} d_X(t) & \leq & d_V(t) \\
    \frac{d}{dt} d_V(t) & \leq & -\alpha\,\phi^2(d_X(t)) \,d_V(t).
  \end{eqnarray*}
  To conclude, we apply lemma \ref{lem:HL} with $\psi(r)=\phi^2(r)$.
\end{proof}

\section{Conclusion}\label{sec:conc}
There is a large number of models for self-organized dynamics \cite{aoki_simulation_1982,ballerini_interaction_2008,buhl_disorder_2006,camazine_self-organization_2001,couzin_collective_2002,gregoire_onset_2004,grimm_individual-based_2005,hemelrijk_self-organized_2008,huth_simulation_1992,huth_simulation_1994,reynolds_flocks_1987,parrish_self-organized_2002,viscido_individual_2004,vicsek_novel_1995,youseff_parallel_2008}. 
In this paper we studied a general class of models for self-organized dynamics which take the form (\ref{eqs:eitan_average}),
\[
\frac{d{\bf x}_i}{dt} = {\bf v}_i, \qquad \frac{d{\bf v}_i}{dt} = \alpha 
\sum_{j\ne i}^N a_{ij}({\bf v}_j-{\bf v}_i), \qquad a_{ij} \geq 0, \ \sum_j a_{ij}=1.
\]
We  focused our attention on the popular Cucker-Smale model, \cite{cucker_emergent_2007,cucker_mathematics_2007}. Its dynamics is governed by symmetric interactions, $a_{ij}=\phi_{ij}/N$, involving a decreasing influence function $\phi_{ij}:=\phi(|x_i-x_j|)$. Here we  introduced an improved model where the interactions between agents is governed by the relative distances, $a_{ij}= \phi_{ij}/\sum_k \phi_{ik}$, which are no longer symmetric. To study the flocking behavior of such asymmetric dynamics, we based our analysis on the amount of influence that agents exert on each other. Using the so-called active sets, we were able to find explicit criteria for the unconditional emergence of a flock. In particular, we  derived a sufficient condition for flocking of our proposed model: flocking occurs independent of the initial configuration, when the interaction function $\phi$ decays sufficiently slow so that its tail is not square integrable, (\ref{eq:phi2_not_integrable}). Similar results holds for models with one or more leaders. This is only slightly more restrictive than the characterization of unconditional flocking in the symmetric case, which requires a non-integrable tail of $\phi$, (\ref{eq:phi_not_integrable}).

In either case, these requirements exclude compactly supported $\phi$'s: unconditional flocking is still restricted by the requirement that each agent is influenced by everyone else. A more realistic requirement is to assume that $\phi$ is rapidly decaying or that the influence function is cut-off at a finite distance. Here, there are two possible scenarios: (i) \emph{conditional} flocking, namely, flocking occurs if $d_V(0)$ and $d_X(0)$ are not too large relative to the rapid decay of $\phi^2$, $d_V(0) \leq \int^\infty_{d_X(0)} \phi^2(r)dr$; (ii) a remaining main challenge is to analyze the emergence of flocking in the \emph{general} case of compactly supported interaction function $\phi$. Clearly, this will have to take into account the \emph{connectivity} of the underlying graph ${\mathcal G}$, (\ref{eq:graph}).  We expect that the notion of active sets will be particularly relevant in this context of compactly supported $\phi$'s. The main difficulty is \emph{counting} the number of ``connected'' agents in the corresponding active sets.  As a prototypical example for the difficulties which arise with both --- asymmetric models and compactly supported interactions, we consider self-organized dynamics which involves \emph{vision}, where each agent has a \emph{cone of vision},
\begin{equation}
  \label{eq:CS_vision}
  \frac{d{\bf x}_i}{dt} = {\bf v}_i, \qquad \frac{d{\bf v}_i}{dt} = \alpha
  \sum_{j=1}^N\, \kappa(\omega_i,{\bf x}_j-{\bf x}_i)\,\phi_{ij} ({\bf v}_j-{\bf v}_i).
\end{equation}
Here, $\kappa(\omega_i,\bx_j-\bx_i)$ determines whether the agent ``$i$", heading in direction ${\bf \omega}_i:={\bf v}_i/|{\bf v}_i|$, ``sees'' the agent ``$j$":
\begin{displaymath}
  \kappa({\bf \omega}_i,{\bf x}_j-{\bf x}_i) = \left\{
    \begin{array}{ll}
      1, & \displaystyle \text{if }{\bf \omega}_i\cdot\frac{{\bf x}_j-{\bf x}_i}{|{\bf
          x}_j-{\bf x}_i|} \geq \gamma >-1, \\ \\
      0, & \text{otherwise},
    \end{array}
  \right.
\end{displaymath}
with $\gamma$ being the radius of the cone of vision $($see figure \ref{fig:vision}$)$. The $\phi_{ij}$\!'s  determine the pairwise alignment within the cone of vision, and can be modeled  either after C-S \eqref{eqs:CS},
\[
\phi_{ij}=\frac{1}{N_i}\phi(|\bx_j-\bx_i|), \qquad N_i:=\#\{j \ | \ \kappa(\omega_i,\bx_j-\bx_i)=1\},
\]
or after our proposed model for alignment, \eqref{eq:eitan}
\[
\phi_{ij}=\frac{\phi(|\bx_j-\bx_i|)}{\sum_{k=1}^{N_i} \phi(|\bx_k-\bx_i|)}.
\] 
In either case, the resulting model \eqref{eq:CS_vision} reads, 
\[
\frac{d{\bf v}_i}{dt} = \alpha
\sum_{j=1}^N\, a_{ij}  ({\bf v}_j-{\bf v}_i), \qquad a_{ij}=\kappa({\bf \omega}_i,{\bf x}_j-{\bf x}_i)\,\phi_{ij},
\]
and it lacks symmetry, $a_{ij}\neq a_{ji}$. The loss of symmetry in this example reflects  possible configurations in which agent ``$i$" ``sees''  agent ``$j$" but not the other way around. This example demonstrates a main difficulty in the flocking analysis of local influence functions, namely, counting the number of active agents $a_{ij}\geq \theta$ inside the cone of vision. We  leave the flocking analysis of this example to a future work.

\begin{figure}[ht]
  \centering
  \includegraphics[scale=1]{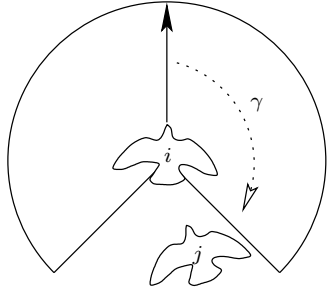}
  \caption{Adding a cone of vision in the C-S model (\ref{eq:CS_vision}) breaks down the symmetry of the interaction. Here, the agent ``$i$'' does not ``see'' the agent ``$j$'' whereas the agent ``$j$'' sees the agent ``$i$''.}
  \label{fig:vision}
\end{figure}


\end{document}